\documentclass[reqno]{amsart}
\usepackage{amssymb,amsthm,amscd}
\input{diagrams}

\newcommand{\fp}{{\mathfrak p}}
\newcommand{\fP}{{\mathfrak P}}
\newcommand{\fq}{{\mathfrak q}}

\newcommand{\Cl}{{\operatorname{Cl}}}
\newcommand{\Gal}{{\operatorname{Gal}}}

\newcommand{\Dv}{\operatorname{Div}}
\newcommand{\frb}[2]{\big[\frac{#1}{#2}\big]}

\newcommand{\Z}{{\mathbb Z}}
\newcommand{\Q}{{\mathbb Q}}

\newcommand{\C}{{\mathbb C}}

\newcommand{\lra}{\longrightarrow}

\newtheorem{thm}{Theorem}[section]
\newtheorem{prop}[thm]{Proposition}
\newtheorem{lem}[thm]{Lemma}
\newtheorem{cor}[thm]{Corollary}

\numberwithin{equation}{section}

\title{Harbingers of Artin's Reciprocity Law. \\
     I. The Continuing Story of Auxiliary Primes}
\author{F. Lemmermeyer}
\email{hb3@ix.urz.uni-heidelberg.de}
\address{M\"orikeweg 1, 73489 Jagstzell, Germany}

\begin{document}

\begin{abstract}
In this article we present the history of auxiliary primes used in
proofs of reciprocity laws from the quadratic to Artin's reciprocity
law. We also show that the gap in Legendre's proof can be closed with
a simple application of Gauss's genus theory.
\end{abstract}

\maketitle
\markboth{Harbingers of Artin's Reciprocity Law}
         {\today \hfil Franz Lemmermeyer}
\bigskip

\section*{Introduction}

Artin's reciprocity law, the central result in class field theory,  
is an isomorphism between some ideal class group and a Galois group, 
and sends the class of a prime ideal $\fp$ to the Frobenius automorphism 
of $\fp$. In this series of articles we would like to discuss results
that, in the long run, turned out to be related to certain pieces of 
the quite involved proof of Artin's reciprocity law. 

In this article we sketch the history of auxiliary primes. These are
prime numbers whose existence was needed in various proofs of 
the quadratic as well as the higher reciprocity laws from Kummer
to Artin. Subsequent articles will discuss the irreducibility
of the cyclotomic equation, Gauss's Lemma and the transfer map,
and finally Bernstein's reciprocity law, which is the special
case of Artin's reciprocity law for unramified abelian extensions
stated by Bernstein long before Artin.

Auxiliary primes in connection with proofs of the quadratic
reciprocity law have a long history. Legendre's attempt at 
proving the reciprocity law failed (see \cite[Ch. 1]{LRL}) 
because he could not guarantee the existence of certain primes.
Gauss's first proof via induction used auxiliary primes whose
existence was secured with the help of an ingenious elementary
argument. After Kummer had proved the $p$-th power reciprocity
law in the fields of $p$-th roots of unity, where $p$ is a 
regular prime, he gave a proof of the quadratic reciprocity
law based on analogous principles which also required auxiliary 
primes; their existence, Kummer claimed, would follow easily from 
the analytic techniques of Dirichlet.

\section{Legendre}

Legendre attempted to prove the quadratic reciprocity law with the
help of his theorem on the solvability of the equation 
$ax^2 + by^2 = cz^2$, whose proof relied heavily on the technique 
of reduction provided by Lagrange:

\begin{thm}\label{LT}
Assume that the integers $a, b, c$ are positive, pairwise coprime,
and squarefree. Then the diophantine equation
\begin{eqnarray}\label{LE} ax^2+by^2+cz^2 = 0 \end{eqnarray}
has non-trivial solutions in $\Z$ if the following three conditions
are satisfied:
$bc$, $ac$ and $-ab$ are squares modulo $a$, $b$ and $c$,
respectively.
\end{thm}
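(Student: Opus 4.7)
Since the equation $ax^2+by^2+cz^2=0$ has only the trivial integer solution when $a,b,c>0$, I interpret it as $ax^2+by^2-cz^2=0$, for which the stated congruence hypotheses are exactly right. The plan is to combine a factorization of this form modulo $abc$ with a pigeonhole argument in a carefully chosen box, producing a small non-trivial integer triple at which the form is divisible by $abc$ and bounded in absolute value by $2abc$, hence equal to $0$ or $\pm abc$.

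The first step linearizes the form modulo $abc$. Choose integers $\rho,\sigma,\tau$ with $\rho^2\equiv bc\pmod a$, $\sigma^2\equiv ac\pmod b$, and $\tau^2\equiv -ab\pmod c$, which exist by hypothesis. Then modulo $a$,
\[
ax^2+by^2-cz^2 \equiv b^{-1}(by-\rho z)(by+\rho z)\pmod a,
\]
with analogous linear splittings modulo $b$ and modulo $c$. The Chinese Remainder Theorem assembles these into
\[
ax^2+by^2-cz^2 \equiv L_1(x,y,z)\,L_2(x,y,z)\pmod{abc}
\]
for two integral linear forms $L_1,L_2$.

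The second step is the pigeonhole. The box of integer triples with $0\le x\le\lfloor\sqrt{bc}\rfloor$, $0\le y\le\lfloor\sqrt{ac}\rfloor$, $0\le z\le\lfloor\sqrt{ab}\rfloor$ contains strictly more than $abc$ points, so two of them give the same residue of $L_1$ modulo $abc$; their difference $(x_0,y_0,z_0)\ne(0,0,0)$ lies in $|x_0|\le\sqrt{bc}$, $|y_0|\le\sqrt{ac}$, $|z_0|\le\sqrt{ab}$, and satisfies $V:=ax_0^2+by_0^2-cz_0^2\equiv 0\pmod{abc}$. From $ax_0^2,by_0^2,cz_0^2\le abc$ we obtain $-abc\le V\le 2abc$, so $V\in\{-abc,0,abc\}$. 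If $V=0$ we are done. If $V=abc$, then $ax_0^2+by_0^2=c(z_0^2+ab)$, and the Brahmagupta identity
\[
(ax_0^2+by_0^2)(z_0^2+ab) = a(x_0z_0+by_0)^2 + b(y_0z_0-ax_0)^2
\]
yields the non-trivial solution $(X,Y,Z)=(x_0z_0+by_0,\,y_0z_0-ax_0,\,z_0^2+ab)$. The case $V=-abc$ forces $x_0=y_0=0$ and $z_0^2=ab$; since $a,b$ are squarefree and coprime this happens only when $a=b=1$, an easily handled degenerate case.

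The main obstacle is this last conversion: one must verify that the stray values $V=\pm abc$ actually produce honest non-trivial solutions rather than collapse to the zero triple. Legendre himself bypassed the issue with an infinite-descent argument, cleaner in presentation but requiring an auxiliary prime to seed the reduction — precisely the prime whose existence he could not secure, which is the historical gap motivating this section.
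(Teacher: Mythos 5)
The paper itself does not prove Theorem \ref{LT}: it only quotes Legendre's result and notes that his proof rested on Lagrange's reduction technique (a descent on the coefficients). So there is nothing in the text to compare against line by line; what you have written is the other classical proof, the one that factors the form into linear forms modulo $abc$ and applies a pigeonhole count in a box of volume just exceeding $abc$ (this is essentially the argument in Ireland--Rosen). You are right to read the equation as $ax^2+by^2-cz^2=0$; as printed, with $a,b,c>0$, the statement is vacuous, and the stated residue conditions are exactly those for the indefinite form. Your two main steps are sound: the splitting $by^2-cz^2\equiv b^{-1}(by-\rho z)(by+\rho z)\pmod a$ glues via CRT into $L_1L_2\pmod{abc}$ because $a,b,c$ are pairwise coprime, and the box $(\lfloor\sqrt{bc}\rfloor+1)(\lfloor\sqrt{ac}\rfloor+1)(\lfloor\sqrt{ab}\rfloor+1)>abc$ forces a nonzero difference vector with $L_1\equiv 0$, hence $V\equiv 0\pmod{abc}$. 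The Brahmagupta identity disposing of $V=abc$ is correctly stated and gives a solution with $Z=z_0^2+ab>0$, so it cannot collapse to the trivial triple. What Legendre's descent buys instead is independence from any pigeonhole counting, at the cost of the auxiliary primes whose existence is the subject of this article; your route needs no auxiliary primes at all.

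Two small points you should tidy up. First, your range $-abc\le V\le 2abc$ also admits $V=2abc$, which you silently drop: it forces $ax_0^2=by_0^2=abc$ and $z_0=0$, hence $x_0^2=bc$ and $y_0^2=ac$, which for squarefree pairwise coprime coefficients means $a=b=c=1$ --- degenerate like your $V=-abc$ case, but it should be said. Second, ``easily handled'' for $a=b=1$ does hide a real (if standard) input: there the pigeonhole output is the useless triple $(0,0,\pm1)$, and one must argue separately that $x^2+y^2=cz^2$ is solvable when $-1$ is a square modulo the squarefree $c$, e.g.\ via the two-squares theorem. Neither point damages the proof, but both deserve a sentence.
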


Let us now briefly explain Legendre's idea for proving the
quadrat reciprocity law
$$ \Big( \frac pq \Big) = (-1)^{\frac{p-1}2 \cdot \frac{q-1}s} \Big( \frac qp \Big) $$
for distinct odd prime numbers $p$. Like Gauss, Legendre distinguished 
eight cases; he denoted primes $\equiv 1 \bmod 4$ by $a$ and $A$,
and primes $\equiv 3 \bmod 4$ by $b$ and $B$. The first of Legendre's
eight cases then states that
$$ \Big(\frac ba \Big) = 1  \quad \Longrightarrow \quad
   \Big(\frac ab \Big) = 1. $$
Legendre observes that the equation $x^2 + ay^2 = bz^2$
is impossible modulo $4$; thus the conditions in Thm. \ref{LT} 
cannot all be satisfied. But there are only two conditions,
one of which, namely $(\frac ba) = +1$, holds by assumption.
Thus the second condition $(\frac{-a}b) = +1$ must fail; but 
then $(\frac{-a}b) = -1$ implies $(\frac ab) = +1$ as desired.

Next consider the equation $Ax^2 + ay^2 = bz^2$; it is easily seen
to be impossible modulo $4$, hence at least one of the three conditions 
in Thm. \ref{LT}, $(\frac{-aA}b) = +1$, $(\frac{Ab}a) = +1$, and
$(\frac{ab}A) = +1$, must fail. Now choose an auxiliary prime $b$ 
such that $(\frac ba) = 1$ and $(\frac Ab) = -1$. By what Legendre 
has already proved, this implies $(\frac ab) = 1$ and $(\frac bA) = -1$. 
Then the first condition $(\frac{-aA}b) = +1$ always holds, and the last 
two conditions are equivalent to  $(\frac{A}a) = +1$ and $(\frac{a}A) = -1$.
Since these cannot hold simultaneously, Legendre concludes that
$$ \Big(\frac aA \Big) = 1  \quad \Longrightarrow \quad
   \Big(\frac Aa \Big) = 1, $$
as well as
$$ \Big(\frac Aa \Big) = -1  \quad \Longrightarrow \quad
   \Big(\frac aA \Big) = -1. $$
This result is proved under the assumption that the following lemma holds:

\begin{lem}\label{LLeg1}
Given distinct primes $a \equiv A \equiv 1 \bmod 4$, there exists a
prime $b$ satisfying the conditions $b \equiv 3 \bmod 4$, 
$(\frac ba) = 1$ and $(\frac Ab) = -1$. 
\end{lem}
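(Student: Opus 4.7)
The plan is to translate the three conditions on $b$ into the single requirement that $b$ be a prime represented by a specific binary quadratic form in a specific genus of discriminant $-4A$, and then to invoke Gauss's genus theory to produce such a prime. The key observation is that for $b \equiv 3 \pmod 4$ one has $\art{-1}{b} = -1$, so $\art{A}{b} = -1$ is equivalent to $\art{-A}{b} = +1$, which by the pre-reciprocity theory of binary quadratic forms (Lagrange) is equivalent to $b$ being represented by some primitive form of discriminant $-4A$.

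First I would pin down the relevant genus. The principal form $x^2 + Ay^2$ represents only integers $\equiv 0, 1, 2 \pmod 4$ (since $A \equiv 1 \pmod 4$), so it cannot represent our $b$; the representing form $Q$ must lie in a non-principal genus. Gauss's genus-counting formula gives $2^{t-1} = 2$ genera for discriminant $-4A$ (with $t=2$ counting the prime discriminants $\{-4, A\}$ into which $-4A$ factors), so the non-principal genus is non-empty.

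Next I would fold in the condition $\art{b}{a} = 1$ by prescribing $(x, y)$ modulo $a$. Since $\gcd(a, -4A) = 1$, the reduction of $Q$ is a non-degenerate binary quadratic form over $\Z/a\Z$, and therefore represents every non-zero residue class, in particular some quadratic residue. Fixing $(x_0, y_0)$ with $Q(x_0, y_0)$ a QR modulo $a$ and with parities chosen so that $Q(x_0, y_0) \equiv 3 \pmod 4$, every value $Q(x, y)$ with $(x, y) \equiv (x_0, y_0) \pmod{4a}$ is simultaneously $\equiv 3 \pmod 4$ and a QR modulo $a$.

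The main obstacle is the final step: extracting an actual prime $b$ from among these values of $Q$. Dirichlet's theorem on primes in arithmetic progressions would settle it at once, but it is precisely the tool Legendre lacked. The remedy is Gauss's theorem that each genus of forms of a given discriminant represents primes in every residue class compatible with its genus characters---a statement Gauss proved by a descent argument internal to the theory of binary quadratic forms. Applied to our non-principal genus of discriminant $-4A$ and to the residue class we have fixed modulo $4a$, this yields the desired prime $b$, closing Legendre's gap.
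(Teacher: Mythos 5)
Your reduction of the three conditions to ``$b$ is a prime represented by a form in the non-principal genus of discriminant $-4A$, lying in a prescribed residue class modulo $4a$'' is sound as far as it goes, but the final step is a genuine gap, and it is exactly the step on which the lemma hinges. There is no theorem of Gauss asserting that each genus represents \emph{primes} in every compatible residue class, let alone one proved by a descent internal to the theory of binary quadratic forms. What genus theory delivers is the existence of a \emph{form} with prescribed characters, hence of represented \emph{integers} in prescribed classes; the existence of represented primes is Dirichlet's Theorem \ref{TDQF} (first fully proved by Weber), an analytic result, and the refinement you invoke --- primes represented by a given form \emph{and} lying in a prescribed class modulo the auxiliary modulus $4a$ --- is stronger still, amounting to an equidistribution statement in the compositum of a ring class field with a cyclotomic field, i.e.\ precisely the kind of result that reciprocity and class field theory are needed to establish. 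So at the decisive moment you cite a tool that does not exist, and the proof does not close.

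The comparison with the paper is instructive: the paper never proves Lemma \ref{LLeg1}. It records Gauss's observation that it is unclear how to obtain it even from the theorem on primes in arithmetic progressions without assuming quadratic reciprocity, and instead proves only Lemma \ref{LLeg2}, which suffices for Legendre's second version of the argument. That proof succeeds precisely where yours fails: for Lemma \ref{LLeg2} one needs a prime $q \equiv 3 \bmod 4$ with $(\frac{-p}q)=+1$, and this last condition is inherited by any prime \emph{divisor} of a represented integer (if $q \nmid \Delta$ divides $Q(x,y)$ with $\gcd(x,y)=1$, then $(\frac{\Delta}q)=+1$), so one never needs a represented prime --- a represented integer $\equiv 3 \bmod 4$ has a prime factor $\equiv 3 \bmod 4$, and that factor does the job. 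In Lemma \ref{LLeg1} the extra condition $(\frac ba)=1$ destroys this device: being a quadratic residue modulo $a$ does not pass from a represented integer to its prime factors, so the prime-divisor trick gives no control over $(\frac ba)$. Unless you supply a genuine substitute for that step, the approach cannot be completed with the elementary tools you are permitting yourself.
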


As for the existence of these primes $b$, Legendre remarks
\begin{quote}
On peut s'assurer qu'il y en a une infinit\'e; mais voici une
d\'emon\-stra\-tion directe qui \'ecarte toute difficult\'e.\footnote{It
is possible to make sure that there are infinitely many such primes;
but here is a direct proof that avoids any difficulty.} 
\end{quote}
He then gives a second proof of this claim, and this time admits
\begin{quote}
Dans cette d\'emonstration, nous avons suppos\'e seulement  qu'il
y avois un nombre premier $b$ de la form $4n-1$, qui pouvoit diviser
la formule $x^2 + Ay^2$.\footnote{In this proof we have only assumed
that there be a prime number $b$ of the form $4n-1$ which divides
the form $x^2 + Ay^2$.}
\end{quote}

In fact, an equation of the form $x^2 + Ay^2 = abz^2$ is impossible
in nonzero integers since $x^2 + y^2 \equiv -z^2 \bmod 4$ implies
that $x$, $y$ and $z$ must be even. Thus the conditions in 
Legendre's Theorem cannot be satisfied. If we choose $b$ prime
such that $(\frac Ab) = -1$, then by what we have already proved
we know that $(\frac bA) = -1$. Since $-A$ is a square modulo 
$a$ and $b$, the last condition that $ab$ be a square modulo $A$
cannot hold; thus $(\frac aA) (\frac bA) = -1$, and this implies
the claim $(\frac aA) = +1$.

For future reference, let us state Legendre's assumption explicitly 
as 

\begin{lem}\label{LLeg2}
For each prime $p \equiv 1 \bmod 4$ there exists a prime
$q \equiv 3 \bmod 4$ such that $(p/q) = -1$.
\end{lem}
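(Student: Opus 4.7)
The plan is to apply Gauss's theory of genera of binary quadratic forms of discriminant $D=-4p$, a fundamental discriminant since $p\equiv 1\bmod 4$.

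The key observation is that the genus characters attached to $D=-4p$ are precisely $\chi_{-4}(n)=(-1/n)$ and $\chi_p(n)=(p/n)$ (Kronecker symbols), corresponding to the decomposition $-4p=(-4)\cdot p$ into the two prime discriminants $-4$ and $p$. A prime $q$ coprime to $4p$ lies in the non-principal genus iff both characters take the value $-1$ at $q$, i.e.\ iff $q\equiv 3\bmod 4$ and $(p/q)=-1$, which is exactly the conclusion we want. The lemma therefore reduces to the assertion that the non-principal genus contains a prime.

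Gauss's principal genus theorem identifies the principal genus with $C(D)^2$, and the 2-rank formula gives $[C(D):C(D)^2]=2^{t-1}=2$, where $t=2$ counts the prime-discriminant divisors of $D$. Hence the non-principal genus is non-empty; pick a form $Q$ in it. Choose $x_0$ so that $n:=Q(x_0,1)$ is coprime to $4p$, which is possible because $Q(x,1)$ has only finitely many roots modulo each prime divisor of $4p$. Since $n$ is represented by $Q$, it is a norm from $\Q(\sqrt{-p})$, so every inert prime divisor of $n$ appears to even multiplicity (and thus contributes trivially to both characters), while every split prime divisor $q_i$ automatically satisfies $\chi_{-4}(q_i)=\chi_p(q_i)$. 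From $\chi_{-4}(n)=-1$ and multiplicativity, some split prime factor $q$ of $n$ must have $\chi_{-4}(q)=-1$, and then $\chi_p(q)=-1$ as well, yielding $q\equiv 3\bmod 4$ and $(p/q)=-1$.

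The one serious ingredient is Gauss's principal genus theorem, which supplies the non-emptiness of the non-principal genus; once that is granted, the passage from a represented integer to an individual prime in the correct genus is a routine multiplicativity argument.
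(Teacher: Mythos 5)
Your argument is correct and coincides with the paper's own (shorter) proof: both take a form $Q$ in the non-principal genus of discriminant $-4p$ --- whose existence you derive from the principal genus theorem plus the $2$-rank formula, equivalent to the genus-theory existence theorem the paper invokes --- and then extract from an integer represented by $Q$ a prime factor $q$ with $(-1/q)=(p/q)=-1$. The only differences are cosmetic (the paper notes directly that an odd represented integer is $\equiv 3 \bmod 4$ and that any prime divisor $q$ of a primitively represented value satisfies $(\Delta/q)=+1$, where you phrase the same step via multiplicativity of the genus characters).
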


Finally, for proving that
$$ \Big(\frac bB \Big) = 1  \quad \Longrightarrow \quad
   \Big(\frac Bb \Big) = -1, $$
Legendre assumes the following

\begin{lem}\label{LLeg3}
For primes $q \equiv r \equiv 3 \bmod 4$ there exists a prime
$p \equiv 1 \bmod 4$ such that $(p/q) = (p/r) = -1$.
\end{lem}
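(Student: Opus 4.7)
\emph{Proof plan.} The plan is to exhibit a residue class modulo $4qr$ all of whose elements satisfy the three required conditions, and then to invoke Dirichlet's theorem on primes in arithmetic progressions to extract a prime from that class.

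I would fix any quadratic non-residue $a$ modulo $q$ and any quadratic non-residue $b$ modulo $r$; these exist since $q$ and $r$ are odd primes. By the Chinese Remainder Theorem applied to the pairwise coprime moduli $4$, $q$, $r$, the system $p \equiv 1 \pmod 4$, $p \equiv a \pmod q$, $p \equiv b \pmod r$ has a unique solution class $c$ modulo $4qr$, automatically coprime to $4qr$. Dirichlet's theorem applied to the arithmetic progression $c + 4qr\,\mathbb{Z}$ then produces (infinitely many) primes $p$ with $p \equiv 1 \pmod 4$, $\art{p}{q} = \art{a}{q} = -1$, and $\art{p}{r} = \art{b}{r} = -1$, as required.

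Gauss's genus theory enters as the abstract suggests. For the discriminant $\Delta = -4qr$ with $q, r \equiv 3 \pmod 4$, there are $2^{t-1}=4$ genera of binary quadratic forms (where $t=3$ is the number of prime divisors of $\Delta$), distinguished by the assigned characters $\chi_q(n) = \art{n}{q}$, $\chi_r(n) = \art{n}{r}$, and $\chi_2(n) = (-1)^{(n-1)/2}$, subject to the single relation $\chi_q \chi_r \chi_2 = 1$ holding on integers represented by forms of discriminant $\Delta$. The admissible triple $(-1,-1,+1)$ satisfies this relation, so the corresponding non-principal genus is non-empty, and a form in it represents an integer with the required residue properties. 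This gives a purely algebraic reason why the target residue class modulo $4qr$ is populated, independently of the Chinese Remainder Theorem argument, and is the ``simple application of Gauss's genus theory'' promised in the abstract. The main obstacle is the final step of promoting such an integer to a prime: neither the CRT argument nor genus theory alone does this, and Dirichlet's theorem must be invoked. This is precisely the tool unavailable to Legendre in 1785, which is the very reason his proof of the reciprocity law remained incomplete.
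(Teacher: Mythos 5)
Your first paragraph is exactly the paper's argument: choose quadratic nonresidues modulo $q$ and modulo $r$, glue them to the congruence condition modulo $4$ by the Chinese Remainder Theorem, and apply Dirichlet's theorem to the resulting residue class modulo $4qr$ (the paper's own sketch writes $c \equiv 3 \bmod 4$, apparently a slip, whereas your $p \equiv 1 \bmod 4$ matches the statement of the lemma). Your genus-theory paragraph is a correct but inessential aside --- the paper deploys genus theory only for Lemma~\ref{LLeg2}, where a congruence condition can be passed to a single prime divisor of a represented integer; as you yourself observe, for the present lemma the three conditions cannot be forced onto one prime factor that way, so Dirichlet's theorem remains indispensable.
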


Legendre returns to the problem of the existence of his auxiliary
primes at the end of his memoir \cite{Leg85}: on p. 552, he writes
\begin{quote}
Il seroit peut-\^etre n\'ecessaire de d\'emontrer rigoureusement
une chose que nous avons suppos\'ee dans plusieurs endroits de cet
article, savoir, qu'il y a une infinit\'e de nombres premiers compris
dans toute progression arithm\'etique, dont le premier terme \& la
raison sont premier entr'eux \ldots\footnote{It is perhaps necessary
to give a rigorous proof of something that we have assumed in several
places of this article, namely, that there is an infinite number of
primes contained in every arithmetic progression whose first term
and ratio are coprime \ldots}.
\end{quote}
He then sketches an approach to this result which he says is
too long to be given in detail.

This last remark is problematic in more than one way: Legendre's
Lemma \ref{LLeg3} certainly follows from the theorem on primes in 
arithmetic progressions: take a quadratic nonresidues $m \bmod q$
and $n \bmod r$, and use the Chinese Remainder Theorem\footnote{In
Legendre's times, this result was credited to Bachet; the name
Chinese Remainder Theorem became common only much later.} to
find an integer $c$ with $c \equiv 3 \bmod 4$, $c \equiv m \bmod q$
and $c  \equiv n \bmod r$. Then any prime in the progression $c + 4qrk$
will satisfy Lemma \ref{LLeg3}. 

On the other hand it is not clear at all whether Lemma \ref{LLeg1} 
and  Lemma \ref{LLeg2} can be deduced from the theorem on primes in 
arithmetic progression without assuming the quadratic reciprocity
law: this was already observed by Gauss in \cite[p. 449]{GDA}.

In the second version of his proof in \cite{LegTN}, Legendre managed 
to do with the existence result in Lemma \ref{LLeg2} alone.

\section{Gauss}

Gauss's first proof of the quadratic reciprocity law used auxiliary
primes that were quite similar to those used by Legendre. Below we 
will first sketch the main idea behind Gauss's first proof and then
use the techniques from his second proof (genus theory of binary
quadratic forms) to prove Legendre's Lemma \ref{LLeg2}.

\subsection*{Gauss's First Proof}
The first proof of the quadratic reciprocity law found by Gauss was 
based on induction (or descent, as Fermat would have said). Assuming
that the reciprocity law holds for small primes, Gauss assumes that
$(\frac rq) = +1$, where $r$ and $q \equiv 1  \bmod 4$ are primea;
then $x^2 \equiv r \bmod q$ shows that there is an equation of the
form $e^2 = r + fq$. Choosing $e$ and $f$ carefully and reducing the
equation modulo $p$, Gauss is able to prove that $(q/r) = +1$.

This procedure fails if $(r/q) = -1$ because in this case we do not
have an equation like $e^2 = r + fq$ to work with. To get around
this problem, Gauss used an auxiliary prime: he found an
elementary, but highly ingenious proof of the following

\begin{lem}\label{LGp}
For every prime $q \equiv 1 \bmod 4$ there is a prime $p < q$
with $(q/p) = -1$.
\end{lem}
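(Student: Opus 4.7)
My plan is to split on $q \bmod 8$ (so either $q \equiv 1$ or $q \equiv 5 \bmod 8$) and in each case to exhibit an explicit positive integer $N<q$ whose prime factorisation is forced, by elementary congruences modulo $4$ and $8$, to contain a prime $p$ with $\art{q}{p}=-1$. The only external inputs I need are the two supplementary laws
\[
\art{-1}{p} = (-1)^{(p-1)/2}, \qquad \art{2}{p} = (-1)^{(p^2-1)/8},
\]
both of which are available independently of reciprocity (via Euler's criterion and Gauss's Lemma).

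The easy case is $q \equiv 5 \bmod 8$. Here I would take $N=(q+1)/2$, an odd positive integer strictly less than $q$ with $N\equiv 3\bmod 4$. A product of primes all $\equiv 1\bmod 4$ is itself $\equiv 1\bmod 4$, so $N$ admits an odd prime divisor $p\equiv 3\bmod 4$. From $p\mid q+1$ one gets $q\equiv -1\bmod p$, and the first supplementary law yields $\art{q}{p}=\art{-1}{p}=-1$; the bound $p\le N<q$ is immediate.

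The main obstacle is the case $q\equiv 1\bmod 8$, where the construction above collapses: $(q+1)/2$ is now $\equiv 1\bmod 4$ and can consist entirely of primes $\equiv 1\bmod 4$ (for instance $q=193$ gives $(q+1)/2 = 97$, a prime of this form). Here I would consider several auxiliary integers in tandem, for example $q-2$, $q+2$, and $q+4$; a prime divisor $p$ of these contributes respectively $\art{q}{p}=\art{2}{p}$, $\art{-2}{p}$, and $\art{-1}{p}$, each governed by $p\bmod 8$ via the supplementary laws. The combinatorial heart of the argument is then to show that, whatever $q$ is modulo $8$ (and perhaps modulo a slightly larger modulus), at least one of these auxiliary integers has a residue class which excludes every ``all-bad'' factorisation, thereby producing a prime factor $p<q$ of the desired non-residue type. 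Exhibiting such a construction that works uniformly, without inadvertently invoking reciprocity, is the ingenious elementary step that I expect to be the real difficulty; the size constraint $p<q$ is automatic, since each auxiliary integer lies below $q+4$ and its smallest prime factor is much smaller.
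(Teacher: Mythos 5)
Your treatment of the case $q \equiv 5 \bmod 8$ is correct and complete: $N = (q+1)/2$ is odd, smaller than $q$, and $\equiv 3 \bmod 4$, hence has a prime factor $p \equiv 3 \bmod 4$, and $q \equiv -1 \bmod p$ gives $(\frac qp) = (\frac{-1}p) = -1$ using only the first supplementary law. This is in fact tighter than the remark accompanying the lemma in the text, which in this case produces the nonresidue $a = (q+1)/2$ modulo $q$ and then invokes quadratic reciprocity to invert the symbol --- something that is off limits in the context of Gauss's first proof, where this lemma is an ingredient.

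The case $q \equiv 1 \bmod 8$, however, is a genuine gap, and it is exactly the case that required Gauss's ``highly ingenious'' elementary argument. You explicitly defer the ``combinatorial heart'' of your proposed attack via $q-2$, $q+2$, $q+4$, and that attack cannot be completed as described. For $q \equiv 1 \bmod 8$ one has $q-2 \equiv 7$, $q+2 \equiv 3$, $q+4 \equiv 5 \bmod 8$, and each of these residue classes is attainable by a product of primes lying entirely in the harmless classes: for $q-2$, primes $\equiv \pm 1 \bmod 8$, where $(\frac 2p) = +1$; for $q+2$, primes $\equiv 1, 3 \bmod 8$, where $(\frac{-2}p) = +1$; for $q+4$, primes $\equiv 1 \bmod 4$, where $(\frac{-1}p) = +1$. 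So no single auxiliary integer forces a suitable prime factor, and since the three integers are pairwise coprime apart from small common factors, their congruence conditions constrain different primes and cannot be played off against one another; enlarging the modulus or the list of shifts does not change this. Gauss's actual argument is of a different nature: assuming that every odd prime $p \le 2\sqrt{q}+1$ satisfies $(\frac qp) = +1$, he derives a contradiction by a delicate analysis of quantities such as $(m+1)^2 - q$ with $m = \lfloor \sqrt{q} \rfloor$, and some quantitative input of this kind appears unavoidable. As it stands, your argument proves the lemma only for $q \equiv 5 \bmod 8$.
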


By what he already proved, Gauss concluded that $(p/q) = -1$ as
well, hence $pr$ is a quadratic residue modulo $q$; by carefully 
studying the equation $e^2 = pr + fq$ and reducing it modulo
various primes, Gauss finally proves that $(q/r) = -1$.

The proof of Lemma \ref{LGp} is trivial if one is allowed to use
quadratic reciprocity: let $a < q$ be an odd quadratic nonresidue 
modulo $q$; then there must be a prime number $p \mid a$ with 
$(p/q) = -1$. Quadratic reciprocity takes care of inverting the
symbol. If $p \equiv 1 \bmod 8$, then $2$ is a quadratic residue
modulo $p$, and proving the existence of $a$ is easy (in this case, 
$a$ actually can always be chosen $< \sqrt{q}$). If $p \equiv 5 \bmod 8$,
then $a = \frac{p+1}2$ will work.

\subsection*{A Proof of Legendre's Lemma}
Let us now give a proof of Legendre's Lemma \ref{LLeg2} based
on Gauss's genus theory. We start by recalling the basic results.
The equivalence classes of primitive binary quadratic forms with 
nonsquare discriminant $\Delta$ form a group called the class group,
which, for fundamental discriminants (these are discriminants of
quadratic number fields) is isomorphic to the class group in the
strict sense of $\Q(\sqrt{\Delta}\,)$. 
Every form $Q = (A,B,C) = Ax^2 + Bxy + Cy^2$ represents integers $a$
coprime to $\Delta$, i.e., there exist integers $x, y$ such that 
$Q(x,y) = a$. Fundamental discriminants $\Delta$ can be written
uniquely as a product $\Delta = \Delta_1 \cdots \Delta_t$ of prime 
discriminants $\Delta_j$; the value $\chi_j(a) = (\Delta_j/a)$ is
well defined and does not depend on the choice of $a$ or on the
representative of the equivalence class of $Q$. This allows us to
define $\chi_j(Q) = \chi_j(a)$, where $a$ is any integer coprime to 
$\Delta$ represented by $Q$. The forms for which
all characters $\chi_j$ are trivial form a subgroup of the class group
called the principal genus, which consists of all square classes.
The main result in genus theory is

\begin{thm}
Given any set of integers $c_1, \ldots, c_t$, there exists a primitive
form $Q$ with discriminant $\Delta =  \Delta_1 \cdots \Delta_t$ such that 
$$ \chi_1(Q) = (-1)^{c_1}, \ldots, \chi_t(Q) = (-1)^{c_t} $$
if and only if $c_1 + \ldots + c_r$ is even.
\end{thm}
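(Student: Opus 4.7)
The plan is to separate the equivalence into necessity and sufficiency: necessity reduces to the identity $(\Delta/a)=+1$ for any integer $a$ coprime to $\Delta$ primitively represented by a form of discriminant $\Delta$, while sufficiency requires the deeper structural input of genus theory. For necessity, let $Q$ be a primitive form of discriminant $\Delta$ and let $a$ coprime to $\Delta$ be primitively represented by $Q$. A unimodular substitution brings $Q$ into the shape $(a,b,c)$ with $b^{2}-4ac=\Delta$, so $b^{2}\equiv\Delta\pmod{4a}$ and the Kronecker symbol $(\Delta/a)$ equals $+1$. Since $(\Delta/a)=\prod_{j=1}^{t}(\Delta_j/a)=\prod_j\chi_j(Q)$, the character values multiply to $+1$, which is exactly the parity condition that $c_1+\cdots+c_t$ be even.

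For sufficiency I would consider the genus character map $\psi\colon \Cl(\Delta)\to\{\pm 1\}^{t}$ sending $[Q]$ to $(\chi_1(Q),\ldots,\chi_t(Q))$; by necessity its image lies in the index-$2$ subgroup $H\le\{\pm 1\}^{t}$ of tuples with product $+1$. The plan is to invoke Gauss's duplication theorem, which identifies the principal genus with $\Cl(\Delta)^{2}$, so $\ker\psi=\Cl(\Delta)^{2}$ and hence $|\operatorname{Im}\psi|=[\Cl(\Delta):\Cl(\Delta)^{2}]$. For a finite abelian group this index equals $|\Cl(\Delta)[2]|$, and Gauss's count of ambiguous classes of discriminant $\Delta$ gives $|\Cl(\Delta)[2]|=2^{t-1}=|H|$, which forces $\operatorname{Im}\psi=H$ and yields the desired sufficiency.

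The main obstacle is the duplication theorem. The inclusion $\Cl(\Delta)^{2}\subseteq\ker\psi$ is immediate from the fact that each $\chi_j$ takes values in $\{\pm 1\}$, so squares are killed. The reverse inclusion — that every class in the principal genus is a square — is the deep half of Gauss's second proof, traditionally handled through composition of forms together with the explicit count of ambiguous classes via reduction theory. A shorter modern route would prescribe the target sign vector, then use the Chinese remainder theorem together with Dirichlet's theorem on primes in arithmetic progressions to produce a prime $p$ with the right values of $(\Delta_j/p)$, which is then represented by some form $Q$ of discriminant $\Delta$ whose genus characters hit the prescribed vector. That shortcut would however be self-defeating in the present context, since the reason for invoking genus theory in this paper is precisely to close the gap in Legendre's argument \emph{without} appealing to Dirichlet's theorem.
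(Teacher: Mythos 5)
The paper does not actually prove this theorem: it is stated as a recollection of Gauss's genus theory, and the article only \emph{uses} it (to produce a form with $\chi_1(Q)=\chi_2(Q)=-1$ for $\Delta=-4p$). So there is no in-paper argument to compare yours against; judged on its own, your outline is the standard and correct one. The necessity direction is complete as you give it: a primitive form represents some positive $a$ coprime to $\Delta$, the congruence $b^2\equiv\Delta\bmod 4a$ forces $(\Delta/a)=+1$, and multiplicativity of the Kronecker symbol over $\Delta=\Delta_1\cdots\Delta_t$ turns this into $\prod_j\chi_j(Q)=+1$. The sufficiency direction is correctly assembled from the two classical inputs: the count $|\Cl(\Delta)[2]|=2^{t-1}$ of ambiguous classes (which bounds the number of genera above by $|H|$ once one knows $\Cl(\Delta)^2\subseteq\ker\psi$) and the duplication theorem $\ker\psi\subseteq\Cl(\Delta)^2$ (which is what pushes the number of genera up to $2^{t-1}$ and forces the image of $\psi$ to be all of $H$). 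Be aware that both of these are left as black boxes in your write-up, and the second is genuinely the hard half of Gauss's genus theory; as a self-contained proof the proposal is therefore a reduction rather than a demonstration, though that is a reasonable stance toward a result the paper itself treats as known. Your closing observation is also exactly right and worth keeping: the shortcut via Dirichlet's theorem on primes in arithmetic progressions would defeat the purpose here, since the whole point of the paper's excursion into genus theory is to establish Legendre's Lemma~\ref{LLeg2} \emph{without} circularly invoking results whose proofs rest on quadratic reciprocity or on the full arithmetic-progression theorem.
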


Given a prime $p \equiv 1 \bmod 4$
we have to find a prime $q \equiv 3 \bmod 4$ such that 
$(p/q) = -1$. The last condition is equivalent to $(-p/q) = +1$.

If $p \equiv 1 \bmod 4$, then $Q = (2,2,\frac{p+1}2)$ is primitive
and has discriminant $\Delta = -4p$. If $p \equiv 5 \bmod 8$, then
$\frac{p+1}2 \equiv 3 \bmod 4$, and the form $Q$ represents integers 
$\equiv 3 \bmod 4$ since e.g. $Q(0,1) = \frac{p+1}2$. This implies that
$Q(0,1)$ must have a prime divisor $q \equiv 3 \bmod 4$. Now we observe

\begin{lem}
Let $Q$ be a primitive binary quadratic form with discriminant $\Delta$.
If a prime number $q \nmid \Delta$ divides $Q(x,y)$ for some coprime 
integers $x, y$, then $(\frac{\Delta}q) = +1$.
\end{lem}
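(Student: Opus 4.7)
The plan is to exploit the standard completion-of-the-square identity
$$4A\cdot Q(x,y) = (2Ax+By)^2 - \Delta y^2,$$
together with its symmetric counterpart
$$4C\cdot Q(x,y) = (Bx+2Cy)^2 - \Delta x^2,$$
both of which follow from a routine expansion using $\Delta = B^2-4AC$. Since $q \mid Q(x,y)$, each identity gives a congruence of the form ``square $\equiv \Delta\cdot(\text{square}) \pmod q$''.

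First I would handle the generic case where $q \nmid y$. Then the first identity yields $(2Ax+By)^2 \equiv \Delta y^2 \pmod q$, and multiplying by the inverse of $y^2$ modulo $q$ shows that $\Delta$ is a square mod $q$. Since $q \nmid \Delta$, the residue is nonzero, so $\bigl(\frac{\Delta}{q}\bigr) = +1$.

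The remaining case is $q \mid y$. Coprimality of $x,y$ forces $q \nmid x$, and then reducing $Q(x,y)\equiv 0\pmod q$ gives $q \mid Ax^2$, hence $q \mid A$. Now I split on whether $q \mid C$. If $q \nmid C$, I apply the symmetric identity: $(Bx+2Cy)^2 \equiv \Delta x^2 \pmod q$, and since $q \nmid x$ the same argument as before yields $\bigl(\frac{\Delta}{q}\bigr) = +1$. If instead $q \mid C$, then primitivity of $Q$ (i.e.\ $\gcd(A,B,C)=1$) forces $q \nmid B$, and $\Delta = B^2-4AC \equiv B^2 \pmod q$ is visibly a nonzero square mod $q$.

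The main obstacle, such as it is, is organising the bookkeeping in the degenerate case $q \mid y$; the trick is to remember that primitivity lets one pivot from $A$ to $C$ (or to $B$) so that at least one of the three natural representations of $\Delta$ modulo $q$ exhibits it as a nonzero square. No use of reciprocity is needed — the argument is purely algebraic manipulation of the discriminant identity.
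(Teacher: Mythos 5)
Your proof is correct and follows essentially the same route as the paper: complete the square via $4A\,Q(x,y)=(2Ax+By)^2-\Delta y^2$ and read off that $\Delta$ is a nonzero square mod $q$, treating the degenerate case separately. In fact your version is the more careful one --- the paper's proof states the identity with a typo ($\Delta x^2$ in place of $\Delta y^2$) and only sketches the degenerate case, whereas you handle it completely by pivoting to the symmetric identity $4C\,Q(x,y)=(Bx+2Cy)^2-\Delta x^2$ and to $\Delta\equiv B^2 \pmod q$ when $q\mid A$ and $q\mid C$.
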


\begin{proof}
Assume that $Q = (A,B,C)$ is a primitive form with discriminant
$\Delta = B^2 - 4AC$; if $Ax^2 + Bxy + Cy^2 \equiv 0 \bmod q$, then 
$$ 4A(Ax^2 + Bxy + Cy^2) = (2Ax+ By)^2 - \Delta x^2 \equiv 0 \bmod q. $$
If $q \mid x$, then $q \mid C$ since $\gcd(x,y) = 1$; but then
$\Delta = B^2 - 4AC \equiv B^2 \bmod C$.
\end{proof}

If $p \equiv 1 \bmod 8$, then $Q$ only represents integers 
that are even or $\equiv 1 \bmod 4$. But in this case, the 
class number of forms with discriminant $-p$ is divisible 
by $4$, and it is easy to see that $[Q] = 2[Q_1]$ for some 
form $Q_1$. If $Q_1$ represents an integer $\equiv 3 \bmod 4$, 
then we are done. If the odd integers represented by $Q_1$ 
are all $\equiv 1 \bmod 4$, then $Q_1$ is in the principal genus:

\begin{lem}
Let $p \equiv 1 \bmod 4$ be a prime number and let $Q$ be a primitive
binary quadratic form with discriminant $-4p$ that represents only
integers that are even or $\equiv 1 \bmod 4$. Then $Q$ is in the
principal genus.
\end{lem}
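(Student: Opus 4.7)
The plan is to exploit the fact that for the discriminant $\Delta=-4p$ with $p\equiv 1\bmod 4$, the prime discriminant factorization is simply $\Delta=(-4)\cdot p$, so there are exactly two genus characters: $\chi_1(n)=(-1/n)$ and $\chi_2(n)=(p/n)$, evaluated at any odd integer $n$ coprime to $p$ represented by the form. I would first verify that both factors $-4$ and $p$ are genuine prime discriminants (using $p\equiv 1\bmod 4$) and record the two characters explicitly.

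Next I would choose a suitable test value. Since $Q$ is primitive, it represents infinitely many integers, and by standard arguments it represents integers coprime to any preassigned modulus; in particular it represents some $a$ coprime to $\Delta=-4p$. Such an $a$ is automatically odd and prime to $p$, and by the hypothesis on $Q$ it must then satisfy $a\equiv 1\bmod 4$. Thus
$$ \chi_1(Q)=\chi_1(a)=\Big(\frac{-1}{a}\Big)=+1. $$

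To get $\chi_2(Q)=+1$ as well, I would invoke the lemma just proved: for every prime $q\mid a$ we have $q\nmid\Delta$ (because $a$ is coprime to $\Delta$), and so $(\Delta/q)=+1$; by multiplicativity $(\Delta/a)=+1$. But for odd $a$ coprime to $p$,
$$ \Big(\frac{\Delta}{a}\Big)=\Big(\frac{-4p}{a}\Big)=\Big(\frac{-1}{a}\Big)\Big(\frac{p}{a}\Big)=\chi_1(a)\chi_2(a), $$
so $\chi_1(a)\chi_2(a)=+1$, and combining with the previous step yields $\chi_2(Q)=+1$. Since both genus characters are trivial on $Q$, the form lies in the principal genus.

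The only slightly delicate point is the innocuous claim that a primitive form with discriminant $-4p$ actually represents some integer coprime to $\Delta$; this is the main potential obstacle, but it is standard, and in fact for this particular discriminant one can simply exhibit an explicit coprime pair $(x,y)$ with $Q(x,y)$ odd and not divisible by $p$, using primitivity of $(A,B,C)$ to avoid the common prime factors. Everything else is a direct manipulation of Jacobi symbols and the factorization of $\Delta$.
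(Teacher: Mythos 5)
Your proof is correct and follows essentially the same route as the paper's: identify the two genus characters $(-1/\cdot)$ and $(p/\cdot)$ attached to $\Delta=-4p$, observe that the hypothesis forces the first character to equal $+1$ on any represented value coprime to $\Delta$, and conclude from the relation $\chi_1(a)\chi_2(a)=+1$ that both characters are trivial. The only (harmless) difference is that you derive that product relation from the preceding lemma on prime divisors of represented values, whereas the paper simply invokes it as part of the main theorem of genus theory.
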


\begin{proof}
Since $\Delta = -4 \cdot p$ is a product of two prime discriminants,
there are only two genus characters, namely $\chi_4(a) = (\frac{-1}a)$
and $\chi_p(a) = (\frac pa)$. If an integer $a$ coprime to $\Delta$ 
is represented by $Q$, then $\chi_4(a) = \chi_p(a)$, and $Q$ is in the 
principal genus if and only if $\chi_1(a) = +1$.

Thus if the odd integers represented by $Q$ are all $\equiv 1 \bmod 4$, 
the form $Q$ must lie in the principal genus.
\end{proof}

Continuing in this way we see that some form $Q_j$ with discriminant
$-4p$ must represent an integer $\equiv 3 \bmod 4$ (unless the $2$-class
group of primitive forms with discriminant $-4p$ has an infinitely
divisible element -- but this would contradict the finiteness of the class
number). This integer must have a prime factor $q \equiv 3 \bmod 4$, 
and since $q$ is represented by some form with discriminant $-4p$, we must
have $(-p/q) = +1$. Since $(-1/q) = -1$, Lemma \ref{LLeg2} follows.

The proof of Legendre's Lemma given above closely follows the way
it was discovered. Actually we can give a much shorter proof: let
$Q$ be a primitive form with discriminant $\Delta = -4p$ and with
genus characters $\chi_1(Q) = \chi_2(Q) = -1$. Then any odd integer
represented by $Q$ is $\equiv 3 \bmod 4$, and now our claim
follows immediately.

It is clear that this proof can be generalized considerably. I have 
meanwhile found that Lubelski \cite{Lub} used similar methods for
proving the following result:

\begin{prop}
Every polynomial $f(x) = ax^2 + bx + c \in \Z[x]$, where $d = b^2 - 4ac$
is not a negative square, has infinitely many prime divisors 
$q \equiv -1 \bmod 4$.
\end{prop}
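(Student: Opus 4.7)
The plan is to generalize the genus-theoretic argument just used for Lemma~\ref{LLeg2}. The identity $4a\,f(x) = (2ax+b)^2 - d$ shows that, apart from the finitely many primes dividing $2a$, a prime $q$ is a prime divisor of $f$ exactly when $d$ is a quadratic residue modulo~$q$; so it suffices to produce infinitely many primes $q \equiv 3 \bmod 4$ with $\big(\frac{d}{q}\big) = +1$.

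When $d$ is a non-negative perfect square, the condition $\big(\frac{d}{q}\big) = +1$ is automatic for every odd $q \nmid d$, and the elementary instance of Dirichlet's theorem in the progression $3 \bmod 4$ (proved \`a la Euclid) settles the claim. Otherwise let $\Delta = \Delta_1 \cdots \Delta_t$ be the fundamental discriminant of $\Q(\sqrt d\,)$, factored into prime discriminants; then $\big(\frac{d}{q}\big) = \big(\frac{\Delta}{q}\big)$ for $q$ coprime to $d/\Delta$, so the task reduces to exhibiting infinitely many primes $q \equiv 3 \bmod 4$ that split in $\Q(\sqrt{\Delta}\,)$. Imitating the short proof of Lemma~\ref{LLeg2}, I use the main theorem of genus theory to choose a primitive form $Q$ of discriminant $\Delta$ whose genus characters force every odd primitive value to be $\equiv 3 \bmod 4$. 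If one of the $\Delta_j$ equals $-4$, this is achieved simply by setting the corresponding character to $-1$ and the others subject to the parity condition $c_1 + \cdots + c_t \equiv 0 \bmod 2$. When $\Delta$ is odd, or when its $2$-part is $\pm 8$, no genus character pins down $a \bmod 4$ directly, and one must either combine the genus choice with an auxiliary congruence on $(x,y) \bmod 8$ or descend in the $2$-class group -- exactly as the paper does for primes $p \equiv 1 \bmod 8$ in the proof of Lemma~\ref{LLeg2}.

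Once a suitable $Q$ is in hand, the lemma on discriminant characters yields $\big(\frac{\Delta}{q}\big) = +1$ for every odd prime $q$ dividing a primitive value $Q(x,y)$, and such a value being $\equiv 3 \bmod 4$ forces at least one of its odd prime factors to be $\equiv 3 \bmod 4$. Infinitude follows by an Euclid-style iteration: given primes $q_1, \ldots, q_n \equiv 3 \bmod 4$ already produced, the Chinese Remainder Theorem supplies coprime $(x_0, y_0)$ with $Q(x_0, y_0) \equiv 3 \bmod 4$ and $Q(x_0, y_0)$ coprime to $q_1 \cdots q_n$ (no obstruction arises, because each $q_i$ only rules out two residues of $x/y$ modulo $q_i$), whence some prime factor $\equiv 3 \bmod 4$ of $Q(x_0, y_0)$ is new. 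The main obstacle is the genus-theoretic construction of $Q$ when the $2$-part of $\Delta$ offers no character controlling $a \bmod 4$; this is precisely the generalization of the $2$-class group descent that was needed in the $p \equiv 1 \bmod 8$ case above, and it is the step where Lubelski's argument and the approach sketched here require real work beyond a direct application of the main theorem of genus theory.
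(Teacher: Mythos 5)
You are generalizing the right argument, but note first that the paper offers no proof of this Proposition at all --- it merely attributes the result to Lubelski and remarks that his methods resemble the genus-theoretic proof of Lemma~\ref{LLeg2} --- so your proposal has to be judged against that method rather than against a written proof. Your reduction is sound: $4af(x) = (2ax+b)^2 - d$ reduces the claim to producing infinitely many primes $q \equiv 3 \bmod 4$ with $\big(\frac{\Delta}{q}\big) = +1$ for the fundamental discriminant $\Delta$ belonging to $d$, and your Euclid-style iteration is the right way to get infinitude. The genuine gap is the case split at the end. You leave the cases ``$\Delta$ odd'' and ``$8 \mid \Delta$'' unresolved, asserting that they need an auxiliary congruence or a descent in the $2$-class group analogous to the $p \equiv 1 \bmod 8$ step of Lemma~\ref{LLeg2}. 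They need neither: these are exactly the cases in which no genus character constrains the odd represented values modulo $4$, and the principal form already represents both odd classes primitively. For $\Delta \equiv 1 \bmod 4$ the principal form $Q = (1,1,\frac{1-\Delta}{4})$ gives $Q(1,2) = 4-\Delta \equiv 3 \bmod 4$; for $\Delta = 4m$ with $m \equiv 2 \bmod 4$ the principal form $(1,0,-m)$ gives $Q(1,1) = 1-m \equiv 3 \bmod 4$. The descent in the paper was forced only by the insistence on starting from the particular ambiguous form $(2,2,\frac{p+1}2)$, which lies in the principal genus when $p \equiv 1 \bmod 8$; since you are free to choose your form, the only case in which genus theory is needed at all is the one where $-4$ occurs among the prime discriminants $\Delta_j$, and there your choice $\chi_{-4}(Q) = -1$ settles it.

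Two further points. First, you never say where the hypothesis that $d$ is not a negative square enters; it is used exactly once, namely to guarantee $\Delta \neq -4$, so that in the case just mentioned there is a second character available to satisfy the parity condition $c_1 + \cdots + c_t \equiv 0 \bmod 2$. (For $d = -n^2$ the statement is false: $x^2 + n^2y^2$ has no odd prime divisors $\equiv 3 \bmod 4$.) Second, two small repairs to the final step: the value $Q(x_0,y_0)$ must also be made coprime to $\Delta$, since the lemma on genus characters only applies to primes $q \nmid \Delta$; and for indefinite $\Delta > 0$ a value $n \equiv 3 \bmod 4$ only yields a prime factor $\equiv 3 \bmod 4$ when $n > 0$ (one wants $|n| \equiv 3 \bmod 4$), which you arrange by keeping $(x,y)$ in a fixed residue class modulo $4$ and moving far enough into the positivity cone of $Q$. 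Both are handled by the same Chinese Remainder argument you already invoke; with these repairs the proof closes.
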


\section{Dirichlet}

Dirichlet, who had studied mathematics in Paris, was not only familiar
with the work of Euler and Gauss, but also knew the results due to
Lagrange and Legendre\footnote{Dirichlet also was aware of Lambert's
article on the irrationality of $\pi$; Lambert's proof was streamlined
by Legendre and later led to dramatic progress under the hands of 
Hermite. In Germany, Lambert's work was apparently less appreciated.}.
In particular he knew Legendre's conjecture about primes in arithmetic 
progressions, and also was aware of the fact that the proof Legendre had 
given was incomplete; in \cite{Dir}, he wrote:
\begin{quote}
Legendre bases the theorem he wants to prove on the problem of 
finding the largest number of consecutive terms of an arithmetic
progression that are divisible by one of a set of given primes; however,
he solves this problem only by induction. If one tries to prove the
solution of the maximum problem found by him, whose form is most
remarkable because of its simplicity, then one comes across
some big difficulties which I did not succeed in surmounting.
\end{quote} 

Dirichlet therefore looked for different tools and found insoiration
in the work of Euler, who had proved the infinitude of primes using
the divergence of the zeta function at $s = 1$. Using Euler's method
Dirichlet was able to prove the following

\begin{thm}\label{TPAP}
Let $a$ and $m$ be coprime natural numbers. Then there exist
infinitely many primes $p \equiv a \bmod m$.
\end{thm}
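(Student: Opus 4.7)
The plan is to follow Euler's strategy of extracting arithmetic information from the pole of an analytic function, but to replace $\zeta(s) = \sum 1/n^s$ by a family of twisted series that, through orthogonality, isolate a single arithmetic progression.

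First I would introduce the Dirichlet characters $\chi\colon (\Z/m\Z)^\times \to \C^\times$, extended to $\Z$ by $\chi(n)=0$ whenever $\gcd(n,m)>1$, together with the associated $L$-series
$$ L(s,\chi) = \sum_{n=1}^\infty \frac{\chi(n)}{n^s} = \prod_p \frac{1}{1-\chi(p)p^{-s}}, \qquad \operatorname{Re}(s)>1. $$
Taking logarithms of the Euler product and invoking the orthogonality relation
$$ \frac{1}{\varphi(m)} \sum_{\chi \bmod m} \overline{\chi(a)}\,\chi(n) = \begin{cases} 1 & \text{if } n\equiv a\bmod m,\\ 0 & \text{otherwise,} \end{cases} $$
the theorem reduces to showing that
$$ \sum_{p\equiv a \bmod m} \frac{1}{p^s} \;=\; \frac{1}{\varphi(m)} \sum_{\chi \bmod m} \overline{\chi(a)}\,\log L(s,\chi) + O(1) $$
diverges as $s\to 1^+$.

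For the principal character $\chi_0$ one has $L(s,\chi_0) = \zeta(s)\prod_{p\mid m}(1-p^{-s})$, whose pole at $s=1$ contributes a term behaving like $\log\frac{1}{s-1}$. For every non-principal $\chi$, summation by parts against the bounded partial sums $\sum_{n\le N}\chi(n)$ shows that $L(s,\chi)$ extends continuously to $\operatorname{Re}(s)\ge 1$, so the contribution of $\chi$ stays bounded \emph{provided} $L(1,\chi)\neq 0$. Granted that, only the principal term survives and divergence follows, yielding the theorem.

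The main obstacle is exactly this non-vanishing $L(1,\chi)\neq 0$ for non-principal $\chi$. A global argument examining the product $\zeta_m(s) = \prod_\chi L(s,\chi)$ and using the non-negativity of its Dirichlet coefficients shows that at most one character can vanish at $s=1$; this immediately disposes of the complex characters, because a complex $\chi$ and its conjugate $\overline{\chi}$ would otherwise both vanish, giving two zeros. The genuinely hard case is a \emph{real} non-principal $\chi$, where this pairing trick fails. Here I would follow Dirichlet and identify $L(1,\chi)$ with a positive multiple of a class number of binary quadratic forms whose discriminant is determined by $\chi$; since that class number is a positive integer, the non-vanishing is forced. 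This class-number-formula step is the true analytic and arithmetic core of the proof, and it is where the argument genuinely departs from Euler's elementary framework.
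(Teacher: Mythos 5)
Your outline is correct and follows exactly the route the paper attributes to Dirichlet: characters and $L$-series, orthogonality to isolate the progression, the pole of $L(s,\chi_0)$, and reduction to $L(1,\chi)\ne 0$, with the real-character case settled by the class number formula for binary quadratic forms. The only nuance worth noting is the one the paper itself stresses: identifying a real primitive character with the Kronecker symbol $(d/\cdot)$ (Dirichlet's Lemma) is essentially equivalent to quadratic reciprocity, so this proof cannot be used circularly to establish the auxiliary-prime lemmas discussed elsewhere in the article.
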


For proving the result Dirichlet introduced characters
$\chi: (\Z/m\Z)^\times \lra \C^\times$ and their associated
L-series $L(s,\chi) = \sum_{n \ge 1}  \chi(n) n^{-s}$. For real
$s > 1$, these L-series can be written as an Euler product:
$L(s,\chi) = \prod_{p} (1 - \chi(p) p^{-s})^{-1}$ if one carefully
defines the values of $\chi(p)$ for primes $p \mid m$. Theorem
\ref{TPAP} is a direct consequence of the following result:

\begin{thm}
If $\chi$ is a nontrivial Dirichlet character modulo $m$, then 
$$ \lim_{s \to 1} L(s,\chi) \ne 0. $$
\end{thm}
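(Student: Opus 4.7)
The plan is to split the argument by whether $\chi$ is complex-valued or real-valued, because the real (quadratic) case is substantially more delicate than the complex one. Two preliminary facts will be used throughout. First, the trivial character satisfies $L(s,\chi_0) = \zeta(s)\prod_{p \mid m}(1-p^{-s})$, so it contributes a simple pole at $s=1$. Second, for any nontrivial $\chi$ the partial sums $\sum_{n \le N}\chi(n)$ are bounded (by $\varphi(m)$, via orthogonality), so Abel summation extends $L(s,\chi)$ to a holomorphic function on $\operatorname{Re}(s) > 0$; in particular $L(1,\chi)$ is a well-defined complex number.

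The first step is to consider the product $\zeta_m(s) = \prod_{\chi} L(s,\chi)$ over all Dirichlet characters modulo $m$. Taking the logarithm of the Euler product and collapsing the sum over $\chi$ by orthogonality gives
\[
 \log \zeta_m(s) \;=\; \varphi(m) \sum_{\substack{p \nmid m,\,k \ge 1 \\ p^k \equiv 1 \bmod m}} \frac{1}{k\, p^{ks}},
\]
a Dirichlet series with nonnegative coefficients, whence $\zeta_m(s) \ge 1$ for real $s > 1$. The product has exactly one simple pole, from $\chi_0$. If $\chi$ is complex (that is, $\chi \ne \overline{\chi}$) and $L(1,\chi)=0$, then also $L(1,\overline{\chi})=\overline{L(1,\chi)}=0$; these two simple zeros more than cancel the pole, making $\zeta_m$ holomorphic at $s=1$ with $\zeta_m(1)=0$, which contradicts $\zeta_m(s)\ge 1$ as $s\to 1^+$.

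The main obstacle is the case of a nontrivial real (hence quadratic) $\chi$, because now a single simple zero at $s=1$ would exactly cancel the pole of $\chi_0$, leaving the product argument above inconclusive. Historically this is exactly where Dirichlet had to invoke his class number formula; a cleaner modern route is through Landau's theorem on Dirichlet series with nonnegative coefficients. Set
\[
 f(s) = \zeta(s)\,L(s,\chi) = \sum_{n \ge 1} \frac{a_n}{n^s}, \qquad a_n = \sum_{d \mid n} \chi(d).
\]
A short check on prime powers, using that $\chi$ is completely multiplicative with values in $\{-1,0,+1\}$, gives $a_n \ge 0$ for every $n$ and $a_{k^2} \ge 1$ for every $k \ge 1$. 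If $L(1,\chi)=0$, the zero cancels the pole of $\zeta$, so $f$ is holomorphic on $\operatorname{Re}(s) > 0$; Landau's theorem then forces the series for $f$ to converge for every real $s > 0$. But at $s=\tfrac12$ we would have
\[
 f\!\left(\tfrac12\right) \;\ge\; \sum_{k \ge 1} \frac{a_{k^2}}{k} \;\ge\; \sum_{k\ge 1}\frac{1}{k} = \infty,
\]
a contradiction. The two ingredients that make this hard case work are the nonnegativity of the divisor-sum coefficients $a_n$ and Landau's theorem; both can be cleanly isolated as separate lemmas, and together they finish the proof.
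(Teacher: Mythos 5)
Your proof is correct and complete, but it is not the route the paper takes: the paper offers no proof of this theorem at all, only a historical account of how Dirichlet handled it. For nonreal $\chi$ your argument (two conjugate simple zeros at $s=1$ cannot be absorbed by the single simple pole of the $\chi_0$-factor in $\prod_\chi L(s,\chi)$, which is $\ge 1$ for real $s>1$) is essentially the classical one the paper calls ``rather easy.'' The real quadratic case is where you genuinely diverge: the paper describes Dirichlet's solution via the class number formula, which evaluates $L(1,\chi)$ as a manifestly positive arithmetic quantity attached to a quadratic field, whereas you use Landau's theorem on Dirichlet series with nonnegative coefficients applied to $\zeta(s)L(s,\chi)=\sum_n a_n n^{-s}$ with $a_n=\sum_{d\mid n}\chi(d)\ge 0$ and $a_{k^2}\ge 1$. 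Each approach buys something. The class number formula proves much more (an exact value of $L(1,\chi)$, which feeds directly into the paper's discussion of densities of primes represented by quadratic forms), but it passes through what the paper calls Dirichlet's Lemma -- the identification of a primitive quadratic character with a Kronecker symbol -- which is essentially quadratic reciprocity, precisely the circularity the paper is concerned with. Your Landau-theorem argument is self-contained and reciprocity-free, so it is the cleaner choice in the logical context of this article; its only additional ingredients are the analytic continuation of $\zeta(s)$ to $\operatorname{Re}(s)>0$ with a simple pole at $s=1$ (needed so that the hypothetical zero of $L(s,\chi)$ really does make $f$ holomorphic there) and Landau's theorem itself, both of which you correctly flag as isolable lemmas. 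I see no gap.
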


Dirichlet never worked with the value $L(1,\chi)$; it was Mertens
who first proved that $L(s,\chi)$ converges at $s = 1$ for all
nontrivial Dirichlet characters $\chi$. Below we will use
$L(1,\chi)$ as an abbreviation of $\lim_{s \to 1} L(s,\chi)$.

For prime values $p$, Dirichlet could express $L(1,\chi)$ as a
finite sum and directly prove that it does not vanish. For composite
values of $m$, on the other hand, he had a rather technical proof
that he abandoned in favor of a more conceptual approach. It is
rather easy to show that  $L(1,\chi) \ne 0$ if $\chi$ assumes nonreal
values, so the problem is showing $L(1,\chi) \ne 0$ for all real
Dirichlet characters. Dirichlet found that the values  $L(1,\chi)$ 
were connected to arithmetic properties of quadratic number fields.
For making the connection, Dirichlet had to show 

\begin{lem}[Dirichlet's Lemma]
If $\chi$ is a primitive quadratic Dirichlet character defined modulo $d$, 
then $\chi(a) = (d/a)$ for all positive integers $a$ coprime to $D$.
\end{lem}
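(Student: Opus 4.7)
The plan is to decompose $\chi$ via the Chinese Remainder Theorem into local components, classify primitive quadratic characters at each prime power, and verify that the resulting product matches the Kronecker symbol.

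First, I would use the decomposition $(\Z/d\Z)^\times \cong \prod_{p \mid d} (\Z/p^{e_p}\Z)^\times$ to write $\chi = \prod_{p \mid d} \chi_p$, where each $\chi_p$ is a character of $(\Z/p^{e_p}\Z)^\times$. Primitivity of $\chi$ modulo $d$ translates to each $\chi_p$ being primitive (nontrivial on the kernel of reduction from $p^{e_p}$ to $p^{e_p-1}$), and since $\chi^2 = 1$ each $\chi_p$ also has order dividing $2$. This reduces the problem to identifying primitive quadratic characters of prime power conductor.

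For odd $p$, the group $(\Z/p^e\Z)^\times$ is cyclic, so it carries a primitive quadratic character only when $e = 1$, in which case the unique such character is the Legendre symbol $(\cdot/p)$; by the quadratic reciprocity law this agrees with the Kronecker symbol $(p^*/\cdot)$ where $p^* = (-1)^{(p-1)/2}p$. For $p = 2$, a direct analysis of $(\Z/2^e\Z)^\times$ shows that primitive quadratic characters occur only at levels $4$ and $8$, and are precisely $(-4/\cdot)$, $(8/\cdot)$, and $(-8/\cdot)$ — again the Kronecker symbols of the corresponding prime discriminants. In each local case the primitive quadratic character is unique (once the 2-part of the conductor is fixed) and is given by a Kronecker symbol.

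Finally, I would assemble the pieces. Every fundamental discriminant admits a unique factorization $d = d_1 \cdots d_t$ into pairwise coprime prime discriminants drawn from $\{-4,\pm 8\}\cup\{p^*: p\text{ odd prime}\}$, and the Kronecker symbol is multiplicative in its upper argument, so $(d/a) = \prod_i (d_i/a)$ for $a$ coprime to $d$. The local identification above then matches each $\chi_p$ with the corresponding factor $(d_i/\cdot)$, giving $\chi(a) = (d/a)$ as claimed.

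The main obstacle is the analysis at $p = 2$: the non-cyclic structure of $(\Z/2^e\Z)^\times$ allows several primitive quadratic characters at conductor $8$, and one must verify that the correct one (with sign matching the sign of $d$) is singled out, and also that no primitive quadratic character exists at conductors $2$ or $16, 32,\ldots$. Once the prime $2$ has been handled, the odd-prime case and the global assembly follow routinely from cyclicity, quadratic reciprocity, and multiplicativity of the Kronecker symbol.
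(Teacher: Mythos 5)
The paper offers no proof of Dirichlet's Lemma: it is stated as a historical result, accompanied only by the remark that it is essentially Euler's version of the quadratic reciprocity law, so there is no argument of record to compare yours with. Your proposal is the standard modern proof and its outline is correct: split $\chi$ into local components by the Chinese Remainder Theorem, note that primitivity and order dividing $2$ pass to the components, classify the primitive quadratic characters of prime-power conductor (only $e=1$ for odd $p$; only conductors $4$ and $8$ at $p=2$), identify each with the Kronecker character of a prime discriminant, and multiply. It is worth emphasizing that the one non-formal input in your argument --- the identity $(a/p)=(p^*/a)$ at odd primes together with the supplementary laws governing the characters of conductor $4$ and $8$ --- is exactly the quadratic reciprocity law, which substantiates the paper's remark that the lemma is equivalent to Euler's form of reciprocity. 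Two details to nail down when writing it out in full: first, as you yourself flag, for conductors divisible by $8$ there are two primitive quadratic characters, so the $d$ appearing in the conclusion must be read as the (possibly negative) fundamental discriminant attached to $\chi$ rather than as the conductor, and that sign has to be tracked through the assembly step; second, the lemma asserts the identity for all positive $a$ coprime to $d$, including even $a$, so after verifying it on odd primes you still need $\chi(2)=(d/2)$ for the Kronecker symbol's defined value at $2$, which is one more instance of the supplementary laws. Neither point is a gap in the idea, only in the level of detail of the sketch.
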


Dirichlet's Lemma is essentially Euler's version of quadratic reciprocity:
since $\chi$ is defined modulo $d$, Dirichlet's Lemma implies that
$(d/p) = (d/q)$ for positive prime numbers $p \equiv q \bmod d$.

We have already remarked that it was Mertens who first succeeded in 
proving Dirichlet's Theorem without using Dirichlet's Lemma; there
are reasons to believe that Mertens' proof is similar to the one that
Dirichlet abandoned since it is in fact based on Dirichlet's results
on the asymptotic behaviour of the divisor function.

We have also observed that Dirichlet's Theorem \ref{TPAP} implies 
Legendre's Lemma \ref{LLeg3}, but that it cannot be used for 
deriving Lemma \ref{LLeg2}. The last lemma would follow from a 
result that Dirichlet announced but whose proof he only sketched 
in a very special case:

\begin{thm}\label{TDQF}
Let $Q = (A,B,C)$ be a primitive quadratic form with discriminant
$\Delta = B^2 - 4AC$. Then the set $S_Q$ of primes represented
by $Q$ has Dirichlet density
$$ \delta(S_Q) = \begin{cases}
                  \frac1h    & \text{ if }\ Q \not\sim (A,-B,C), \\
                  \frac1{2h} & \text{ if }\ Q \sim (A,-B,C),
                 \end{cases} $$
where $h$ is the class number of forms of discriminant $\Delta$.
\end{thm}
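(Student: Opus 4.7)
The plan is to translate the question about primes represented by $Q$ into a question about prime ideals in the quadratic order $\cO_\Delta$ of discriminant $\Delta$, and then to extract the density by means of an L-series computation with characters of the form class group, in exact analogy with Dirichlet's proof of Theorem \ref{TPAP}.

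First I would set up the classical bijection between proper equivalence classes of primitive forms of discriminant $\Delta$ and ideal classes of $\cO_\Delta$; under this correspondence the form $Q$ is attached to an ideal class $C = [\fa_Q]$. The basic lemma to verify is that a prime $p \nmid \Delta$ is represented by $Q$ if and only if $p$ splits in $\cO_\Delta$ as $p\cO_\Delta = \fp\bar{\fp}$ with $[\fp] = C$ or $[\fp] = C^{-1}$. When $Q \sim (A,-B,C)$ one has $C = C^{-1}$, so both prime ideals above $p$ sit in the same class and each represented prime $p$ is counted twice in the ideal-theoretic density; when $Q \not\sim (A,-B,C)$ the two classes $C, C^{-1}$ are distinct and each rational prime is counted only once. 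This is the source of the factor $2$ in the second case.

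Next I would introduce, for each character $\chi$ of the form class group $\Cl(\Delta)$, the L-series
$$ L(s,\chi) \ = \ \sum_{\fa} \chi([\fa])\, N(\fa)^{-s} \ = \ \prod_\fp \big(1-\chi([\fp])N(\fp)^{-s}\big)^{-1}, $$
convergent and equal to its Euler product for $s > 1$. Taking logarithms, discarding the contribution of prime ideals of degree two (which stays bounded as $s \to 1^+$) and applying orthogonality in the form
$$ \mathbf{1}_{[\fp]=C} \ = \ \frac{1}{h}\sum_\chi \overline{\chi(C)}\,\chi([\fp]), $$
one obtains
$$ \sum_{[\fp]=C} N(\fp)^{-s} \ = \ \frac{1}{h}\sum_\chi \overline{\chi(C)}\,\log L(s,\chi) \ + \ O(1) $$
as $s \to 1^+$. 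For the trivial character $\chi_0$ the series $L(s,\chi_0)$ is essentially the Dedekind zeta function of $\Q(\sqrt{\Delta}\,)$ and therefore has a simple pole at $s=1$, contributing $\frac{1}{h}\log\frac{1}{s-1}$. Dividing by $\log\frac{1}{s-1}$ and letting $s \to 1^+$ gives Dirichlet density $1/h$ for prime ideals of degree one whose class is $C$; translating back to rational primes and taking into account the double counting in the ambiguous case yields the stated densities $1/h$ and $1/(2h)$.

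The real obstacle, exactly as in the arithmetic-progression case, is to show that $L(1,\chi) \ne 0$ for every nontrivial character $\chi$ of $\Cl(\Delta)$. For \emph{genus characters} $\chi_j$ this reduces to Dirichlet's own non-vanishing result, because Gauss's genus theory factors $L(s,\chi_j)$ as a product of two ordinary Dirichlet L-functions. For characters that cut out non-genus subgroups of $\Cl(\Delta)$, however, one is dealing with an honest class-group L-function whose non-vanishing at $s=1$ is not accessible by Dirichlet's original techniques; the modern argument identifies $L(s,\chi)$ with a ratio of Dedekind zeta functions of the associated ring class field and uses that the latter is holomorphic and nonzero at $s=1$. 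This is precisely the gap that forced Dirichlet to publish only a sketch in a very special case, and any complete proof must supply this step by one of those later routes.
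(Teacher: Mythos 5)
The paper itself offers no proof of Theorem \ref{TDQF}: it is quoted as a result that Dirichlet announced and ``whose proof he only sketched in a very special case,'' so there is nothing in the text to compare your argument against line by line. That said, your outline is the standard route and is essentially sound. The dictionary between proper equivalence classes of primitive forms of discriminant $\Delta$ and ideal classes of the order $\cO_\Delta$ (in the strict sense when $\Delta>0$, and restricted to invertible ideals prime to the conductor when $\Delta$ is not fundamental --- both points worth stating explicitly), together with the observation that $p\nmid\Delta$ lies in $S_Q$ exactly when it has a degree-one prime divisor in $C$ or $C^{-1}$, gives the correct bookkeeping: one prime ideal of class $C$ per represented prime when $C\ne C^{-1}$, two when $C=C^{-1}$, whence the densities $1/h$ and $1/(2h)$ once degree-one prime ideals in a fixed class are known to have density $1/h$. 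The orthogonality and Euler-product manipulations you describe are the right way to reduce that to the nonvanishing of $L(1,\chi)$ for nontrivial characters of the form class group, and you are right that this is where all the difficulty sits: the genus characters factor into pairs of ordinary Dirichlet $L$-functions, but for the remaining characters your proposal points at the two available completions (Weber's limit-formula argument, or identifying $\prod_\chi L(s,\chi)$ with the zeta function of the ring class field) without carrying either out. So what you have is an accurate and honest roadmap rather than a finished proof --- which, as you note yourself, is precisely the point at which Dirichlet also stopped; if you intend this as a complete argument, the nonvanishing step for non-genus characters must actually be supplied.
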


Dirichlet's claims were slightly different, since he
worked with forms $(A,2B,C)$ whose middle coefficients are even.
If $Q = (1,0,1)$, then $h = 1$, hence Thm. \ref{TDQF} tells us
that primes represented by $Q$ have Dirichlet density $\frac12$.

If $\Delta = -23$, then $h = 3$, and the primes represented by the 
principal form $(1,1,6)$ have density $\frac16$, whereas the
primes represented by $(2,1,3)$ have density $\frac13$. The forms
$(2,3,4)$ and $(2,-1,3)$ clearly represent the same primes, and
primes represented by any form with discriminant $\Delta$ have
density $\frac13 + \frac16 = \frac12$: these are the primes $p$
satisfying $(-23/p) = +1$. In general, Dirichlet's Theorem \ref{TDQF}
has the following

\begin{cor}\label{CDir}
Let $\Delta$ be a quadratic discriminant. Then the prime numbers $p$ 
with $(\Delta/p) = +1$ and those with $(\Delta/p) = -1$ each
have Dirichlet density $\frac12$.
\end{cor}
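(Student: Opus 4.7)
The plan is to derive the corollary from Theorem~\ref{TDQF} by summing the densities over all equivalence classes of forms of discriminant $\Delta$, and then to handle the opposite sign by complementarity. First I would recall the classical fact that, apart from the finitely many primes dividing $\Delta$, a prime $p$ satisfies $(\Delta/p) = +1$ if and only if $p$ is represented by some primitive form of discriminant $\Delta$. Thus
$$ \{p : (\Delta/p) = +1\} = \bigcup_{[Q]} S_Q $$
up to a finite (density-zero) set, where the union runs over the $h$ equivalence classes.

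Next I would organize the classes according to the involution $Q = (A,B,C) \mapsto Q^* = (A,-B,C)$. Since $Q(x,-y) = Q^*(x,y)$, the classes $[Q]$ and $[Q^*]$ represent exactly the same integers, hence the same primes. Splitting the $h$ classes into $a$ ambiguous classes (those with $Q \sim Q^*$) and $n$ unordered pairs of non-ambiguous opposite classes, we have $h = a + 2n$, and the union above is a disjoint union of $a + n$ sets. By Theorem~\ref{TDQF}, each ambiguous class $[Q]$ contributes density $\frac{1}{2h}$, while each non-ambiguous pair contributes a single set $S_Q = S_{Q^*}$ of density $\frac{1}{h}$. Summing,
$$ \delta\bigl(\{p : (\Delta/p) = +1\}\bigr) = a \cdot \frac{1}{2h} + n \cdot \frac{1}{h} = \frac{a + 2n}{2h} = \frac12. $$

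For the other sign I would simply observe that the primes dividing $\Delta$ form a finite set of density $0$, so the remaining primes split into those with $(\Delta/p) = +1$ and those with $(\Delta/p) = -1$. Since the total density of primes is $1$ and the first set has density $\frac12$, the second set also has density $\frac12$.

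The only mildly non-trivial step is the unions-are-disjoint bookkeeping, which rests on the classical statement that a prime $p \nmid \Delta$ with $(\Delta/p) = +1$ is represented by at least one form and that opposite classes represent the same primes; both facts are standard in the reduction theory of binary quadratic forms and were implicitly used throughout the preceding section. The main conceptual obstacle is not computational but rather locating the right partition of the class group under the involution $Q \mapsto Q^*$, which is precisely what makes the densities $\frac{1}{h}$ and $\frac{1}{2h}$ from Theorem~\ref{TDQF} add up cleanly to $\frac12$.
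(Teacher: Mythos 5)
Your argument is correct and is essentially the proof the paper has in mind: the paper only sketches it through the $\Delta=-23$ example ($\tfrac13+\tfrac16=\tfrac12$) before stating the corollary, and your class-by-class summation under the involution $(A,B,C)\mapsto(A,-B,C)$, with $h=a+2n$ ambiguous classes plus opposite pairs, is exactly that computation carried out in general, together with the same complementarity step for the sign $-1$. (The paper's only additional remark is that the corollary can also be obtained directly from the nonvanishing of $L(1,\chi)$ for the Kronecker character $(\Delta/\cdot)$ without invoking Theorem~\ref{TDQF} at all, but that is an alternative it mentions, not the proof it relies on.)
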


This corollary, by the way, can be proved directly without having
to assume the quadratic reciprocity law; in fact, Dirichlet's Lemma
has to be invoked only for transforming a quadratic Dirichlet character 
$\chi$ modulo $\Delta$ into a Kronecker character $(\Delta/\cdot)$, and the 
character in Cor. \ref{CDir} is already a Kronecker character.

We now apply Cor. \ref{CDir} to forms with discriminant $\Delta = -p$,
where $p \equiv 1 \bmod 4$ is a prime number. In this case there are 
exactly two forms with discriminant $\Delta = -4p$ satisfying 
$(A,B,C) \sim (A,-B,C)$, namely the principal form $Q_0 = (1,0,p)$ 
and the ambiguous form $Q_1 = (2,2,\frac{p+1}2)$. 

If $p \equiv 5 \bmod 8$, then the number $h_1 = h/2$ of classes in the
principal genus is odd, and the primes represented by forms in the 
principal genus have density 
$\frac1{2h} + \frac{h_1-1}{2h} = \frac14$. 

If $p \equiv 1 \bmod 8$, then the number $h_1 = h/2$ of classes in the
principal genus is even, and the primes represented by forms in the 
principal genus have density 
$\frac{2}{2h} + \frac{h_1-2}{2h} = \frac14$. In particular, the
primes represented by some form not in the principal genus have
density $\frac14$, and in particular, we find the following strengthening
of Legendre's Lemma \ref{LLeg2}: the primes $q$ with 
$(\frac{-1}q) = (\frac pq) = -1$ have Dirichlet density $\frac14$.
 
\section{Kummer and Hilbert}

Starting with Kummer, auxiliary primes became an indispensible
tool for proving higher reciprocity laws, and variants of the
corresponding existence results can be found in the work of 
Hilbert, Furtw\"angler and Takagi on class field theory.

\subsection*{Kummer's Proof of Quadratic Reciprocity}

In \cite{KuQR}, Kummer gave two new proofs of the $p$-th power 
reciprocity law for regular primes; in the introduction, he 
proves the quadratic reciprocity law by similar methods. The
first proof is similar to Legendre's; the main difference is
that Kummer replaces Legendre's equation $ax^2 + by^2 = cz^2$
by the Pell equation. In the course of his proof, Kummer assumes
the following existence results:

\begin{lem}
Let $p \equiv 3 \bmod 4$ and $q \equiv 1 \bmod 4$ be distinct primes 
numbers. Then there exist primes $p' \equiv 3 \bmod 4$ such that 
$(p'/p) = (p'/q) = -1$.
\end{lem}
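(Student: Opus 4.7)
My plan is to deduce this lemma directly from Dirichlet's theorem on primes in arithmetic progressions (Theorem~\ref{TPAP}), which is available by this point in the narrative and which Kummer himself invoked under the slogan of the ``analytic techniques of Dirichlet''. The key observation is that, unlike in Legendre's Lemma~\ref{LLeg2}, the fixed primes $p$ and $q$ here sit in the \emph{top} of the Legendre symbols $\art{p'}{p}$ and $\art{p'}{q}$, so each of the two conditions $\art{p'}{p} = -1$ and $\art{p'}{q} = -1$ reduces to prescribing $p'$ in a specific residue class modulo $p$ (respectively modulo $q$).

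Concretely, I would pick any quadratic nonresidue $n_p$ modulo $p$ and any quadratic nonresidue $n_q$ modulo $q$; these exist because the nonresidues form the nontrivial coset of the squares in $(\Z/p\Z)^\times$ and $(\Z/q\Z)^\times$. Since $4$, $p$, $q$ are pairwise coprime (both $p$ and $q$ are odd and distinct), the Chinese Remainder Theorem produces a unique residue class $r \bmod 4pq$ with
\[ r \equiv 3 \bmod 4, \qquad r \equiv n_p \bmod p, \qquad r \equiv n_q \bmod q, \]
and $\gcd(r,4pq) = 1$ by construction. Applying Theorem~\ref{TPAP} to this progression yields infinitely many primes $p' \equiv r \bmod 4pq$; each of them satisfies $p' \equiv 3 \bmod 4$ together with $\art{p'}{p} = \art{n_p}{p} = -1$ and $\art{p'}{q} = \art{n_q}{q} = -1$, which is precisely what was required.

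I do not anticipate a real obstacle, but the subtle point deserves emphasis because it is exactly what distinguishes this lemma from the harder Lemma~\ref{LLeg2}: as Gauss's remark quoted in the excerpt warns, a condition of the shape $\art{p}{q} = -1$ with the fixed prime $p$ in the \emph{bottom} cannot be converted into a residue class condition on $q$ without invoking quadratic reciprocity. Here no such conversion is needed, so Dirichlet's theorem suffices on its own---and, invoking Corollary~\ref{CDir} componentwise, the set of eligible $p'$ even has positive Dirichlet density---so no circularity arises when the lemma is fed back into Kummer's proof of quadratic reciprocity.
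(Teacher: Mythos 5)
Your proof is correct and matches the paper's approach: the paper disposes of this lemma with the single remark that it ``clearly follows from Dirichlet's theorem on primes in arithmetic progression,'' and your Chinese Remainder Theorem construction of the class $r \bmod 4pq$ is precisely the argument the paper itself spells out for the analogous Lemma~\ref{LLeg3}, valid here for the same reason you identify (both fixed primes sit in the top of the symbol). One caution on your closing aside: the claim that ``no circularity arises'' is too quick, since the paper explicitly warns that Dirichlet's own proof of Theorem~\ref{TPAP} invoked quadratic reciprocity through Dirichlet's Lemma, so that feeding this existence result back into Kummer's proof of the reciprocity law requires the reciprocity-free version of Dirichlet's argument for prime moduli, extended to the modulus $4pq$.
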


This clearly follows from Dirichlet's theorem on primes in 
arithmetic progression.

\begin{lem}
Let $p \equiv 3 \bmod 4$ and $q \equiv 1 \bmod 4$ be distinct primes 
numbers. Then there exist primes $p' \equiv 3 \bmod 4$ such that 
$(p/p') = (q/p') = -1$.
\end{lem}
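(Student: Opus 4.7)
The plan is to imitate the genus-theoretic proof of Lemma~\ref{LLeg2} in Section~2, but with a fundamental discriminant possessing \emph{three} prime-discriminant factors instead of two, so that all three of the prescribed conditions on $p'$ can be encoded in genus characters at once.

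I take $\Delta = 4pq$. Since $p \equiv 3$ and $q \equiv 1 \bmod 4$, one has $pq \equiv 3 \bmod 4$ and $\Delta \equiv 12 \bmod 16$, so $\Delta$ is a fundamental discriminant, with the unique decomposition into prime discriminants
$$ \Delta \;=\; (-4)\cdot(-p)\cdot q, $$
the sign on $-p$ being dictated by $p \equiv 3 \bmod 4$. The three associated genus characters are
$$ \chi_1(a)=\art{-1}{a}, \qquad \chi_2(a)=\art{-p}{a}, \qquad \chi_3(a)=\art{q}{a}. $$

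Next I translate the three desired conditions on $p'$ into character values: $p' \equiv 3 \bmod 4$ is $\chi_1(p')=-1$; under this congruence $(-1/p')=-1$, so $(p/p')=-1$ is equivalent to $(-p/p')=+1$, i.e.\ $\chi_2(p')=+1$; and $(q/p')=-1$ reads $\chi_3(p')=-1$. The parity of the target vector $(1,0,1)$ is $1+0+1=2$, which is even, so Gauss's genus theorem (the theorem at the head of Section~2) supplies a primitive form $Q$ of discriminant $\Delta$ with exactly these character values. By Theorem~\ref{TDQF} the set $S_Q$ of primes represented by $Q$ has positive Dirichlet density, hence is nonempty; any prime $p' \in S_Q$ coprime to $\Delta$ inherits $\chi_i(p')=\chi_i(Q)$ for $i=1,2,3$, and therefore satisfies $p' \equiv 3 \bmod 4$, $(p/p')=-1$, and $(q/p')=-1$.

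The main obstacle is purely algebraic bookkeeping: one must track the signs in the prime-discriminant factorization carefully (using $p \equiv 3 \bmod 4$ to replace $p$ by $-p$, so that the resulting product really equals $+4pq$) and check that the target parity vector $(1,0,1)$ has even weight, which is precisely the condition making the prescribed genus nonempty. A minor point is that $\Delta = 4pq > 0$, so $Q$ is indefinite; but the genus theorem and Theorem~\ref{TDQF} are both stated for arbitrary nonsquare discriminants, so this causes no difficulty. If one prefers a definite form, one can instead take $\Delta' = -8pq = 8 \cdot (-p) \cdot q$ and run the analogous argument, at the cost of forcing the slightly stronger conclusion $p' \equiv 3 \bmod 8$.
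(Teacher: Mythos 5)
Your argument is correct, and in substance it is the second of the two routes the paper merely gestures at: identify the target genus via Gauss's genus theorem and then produce a prime in it via Theorem~\ref{TDQF}. Two points of comparison are worth recording. First, the paper's parenthetical suggestion to work with discriminant $\Delta = -4pq$ has the wrong sign: a prime $p'$ with $p' \equiv 3 \bmod 4$ and $\art{p}{p'} = \art{q}{p'} = -1$ satisfies $\art{-4pq}{p'} = -1$, so it is neither represented by, nor a divisor of a primitively represented value of, any form of discriminant $-4pq$ (a discriminant which is moreover non-fundamental here, since $-pq \equiv 1 \bmod 4$). Your choice $\Delta = 4pq = (-4)(-p)(q)$ is the one for which $\art{\Delta}{p'} = +1$, and it is fundamental; it is the correct analogue of the discriminant $-4p$ used for Lemma~\ref{LLeg2}. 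Second, your appeal to Theorem~\ref{TDQF} is not merely a convenience: the elementary device in the proof of Lemma~\ref{LLeg2} --- represent an integer $\equiv 3 \bmod 4$ and pass to a suitable prime factor $r$ --- controls only the \emph{product} of the remaining genus characters of $r$ (through $\art{\Delta}{r} = +1$), which with two characters pins down both values but with three characters leaves two admissible sign patterns for $\bigl(\art{-p}{r}, \art{q}{r}\bigr)$; so some density input (or a further iteration over genera) is genuinely needed, and your proof supplies it. The only caveat is that $\Delta = 4pq > 0$ makes $Q$ indefinite, so the genus characters and Theorem~\ref{TDQF} must be read in the narrow (strict) sense; this is standard and harmless.
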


This lemma can be proved in the same way as Legendre's Lemma \ref{LLeg2}
above by exploiting the genus theory for forms with discriminant
$\Delta = -4pq$, and also follows from Dirichlet's Theorem \ref{TDQF}.

\begin{lem}
Given primes $q \equiv q' \equiv 1 \bmod 4$ be distinct primes 
numbers. Then there exist primes $p \equiv p' \equiv 3 \bmod 4$
such that $(p/q) = -1$, $(p/q') = +1$, $(p'/q) = +1$ and 
$(p'/q') = -1$. 
\end{lem}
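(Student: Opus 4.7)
The plan is to reduce both existence statements to direct applications of Dirichlet's Theorem \ref{TPAP} on primes in arithmetic progressions, in exactly the same spirit as the argument already sketched earlier in the paper for Legendre's Lemma \ref{LLeg3}. The key observation, which should make the whole proof routine, is that for a fixed odd prime $q$ the value of $(p/q)$ depends only on the residue class of $p$ modulo $q$; hence every condition imposed on $p$ and on $p'$ is actually a congruence condition, and we never have to invert a Legendre symbol.

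To produce $p$, I would first choose any quadratic nonresidue $a \bmod q$ and any nonzero quadratic residue $b \bmod q'$; both exist because each of $q$ and $q'$ has $(q-1)/2$, respectively $(q'-1)/2$, nonzero residues and the same number of nonresidues. Since $4$, $q$ and $q'$ are pairwise coprime, the Chinese Remainder Theorem yields an integer $c$ with
$$ c \equiv 3 \bmod 4, \qquad c \equiv a \bmod q, \qquad c \equiv b \bmod q', $$
and the class of $c$ modulo $4qq'$ is coprime to $4qq'$. Dirichlet's theorem then supplies infinitely many primes $p$ in this class, each of which automatically satisfies $p \equiv 3 \bmod 4$, $(p/q) = -1$ and $(p/q') = +1$. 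The construction of $p'$ is the mirror image: swap the roles of $q$ and $q'$ by choosing a nonzero residue modulo $q$ and a nonresidue modulo $q'$, assemble the corresponding class modulo $4qq'$ via CRT, and invoke Dirichlet's theorem once more.

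The main point worth emphasizing is that the proof costs us nothing beyond Theorem \ref{TPAP}; in particular the quadratic reciprocity law is not used, which is essential since this lemma appears in Kummer's path toward a proof of that very law. Consequently there is no genuine obstacle here — the only verifications needed are that the prescribed residue and nonresidue classes modulo $q$ and $q'$ exist and are nonzero, both of which are immediate. If one wanted a strengthening in the spirit of Corollary \ref{CDir}, one could even read off that the primes produced have Dirichlet density $\tfrac{1}{4}\cdot\tfrac{q-1}{2(q-1)}\cdot\tfrac{q'-1}{2(q'-1)} = \tfrac{1}{16}$ inside the primes, but for the statement as given this is unnecessary.
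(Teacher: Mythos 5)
Your argument is correct and is essentially the paper's own route: the paper disposes of this lemma with the single remark that it is ``again a consequence of Dirichlet's theorem on primes in arithmetic progression,'' and your CRT construction is exactly the reduction it spells out earlier for Lemma~\ref{LLeg3}, resting on the correct observation that $(p/q)$ depends only on $p \bmod q$ so that no symbol ever needs to be inverted. The only slip is in your optional density aside: the admissible classes make up $\frac12\cdot\frac12\cdot\frac12$ of $(\Z/4qq'\Z)^\times$, so the primes produced have Dirichlet density $\frac18$ rather than $\frac1{16}$ --- but this does not affect the proof of the statement.
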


This is again a consequence of Dirichlet's theorem on primes in 
arithmetic progression.

Kummer does not give proofs for these existence results and simply
remarks that they can be derived using Dirichlet's methods. It must 
be observed, however, that Dirichlet's proof of the theorem on primes
in arithmetic progression used the quadratic reciprocity law for
showing that every real primitive Dirichlet character $\chi$ modulo $D$ 
has the form $\chi(p) = (D/p)$ for some quadratic discriminant $D$; thus
things are not as easy as Kummer pretends. Dirichlet actually gave a 
proof that there exist infinitely many primes in coprime residue 
classes modulo $p$, where $p$ is a prime number, that did not depend 
on quadratic reciprocity. With a little effort, this proof can be 
extended to residue classes modulo $4pq$, which would cover the 
existence results that Kummer had used.

\subsection*{Kummer's Reciprocity Law}

After Gauss and Jacobi had stated and proved the first ``higher''
reciprocity laws for fourth and third powers, it became clear that
even for stating any $p$-th power reciprocity law one had to
work in the field of $p$-th roots of unity. The problems coming
from nonunique factorization were overcome by Kummer's invention
of ideal numbers. Eisenstein used Kummer's ideal numbers for
proving ``Eisenstein's reciprocity law'', which turned out to
be as indispensible for proving higher reciprocity laws as the existence
of auxiliary primes; only Artin finally succeeded in eliminating
Eisenstein's reciprocity law by replacing it with the technique of
abelian twists used by Chebotarev.

Kummer's existence result was stated and proved in \cite[p. 138]{KuAR}.
In its original form, it reads as follows:

\begin{thm}
Let $F(\alpha)$, $F_1(\alpha)$, $F_2(\alpha)$, \ldots, $F_{n-1}(\alpha)$
denote real complex numbers satisfying the condition that the product
$$ F(\alpha)^m F_1(\alpha)^{m_1} F_2(\alpha)^{m_2} 
                      \cdots F_{n-1}(\alpha)^{m_{n-1}} $$ 
for integral values of the exponents becomes a $\lambda$-th power
if and only if all these exponents are $\equiv 0 \bmod \lambda$.
Then there exist infinitely many prime numbers $\phi(\alpha)$ with
respect to which the indices of the complex numbers 
$F(\alpha)$, $F_1(\alpha)$, \ldots, $F_{n-1}(\alpha)$ are proportional
modulo $\lambda$ to arbitrarily given numbers.
\end{thm}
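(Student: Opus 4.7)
The plan is to recast the statement as a Chebotarev-type density assertion for an abelian Kummer extension, and then to invoke the cyclotomic adaptation of Dirichlet's analytic method that Kummer had at his disposal. Set $K = \Q(\alpha)$ with $\alpha$ a primitive $\lambda$-th root of unity, and form
$$ L = K\bigl(\sqrt[\lambda]{F(\alpha)},\sqrt[\lambda]{F_1(\alpha)},\ldots,\sqrt[\lambda]{F_{n-1}(\alpha)}\bigr). $$
The multiplicative-independence hypothesis on the $F_i(\alpha)$ says precisely that their classes in $K^\times / K^{\times\lambda}$ are $\lambda$-linearly independent, so that, by the Kummer correspondence, $L/K$ is an abelian extension with $\Gal(L/K) \cong (\Z/\lambda\Z)^n$. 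Under the Kummer pairing the Frobenius at an unramified prime ideal $\phi$ of $K$ is exactly the $n$-tuple of $\lambda$-th power residue symbols $\bigl((F/\phi)_\lambda,(F_1/\phi)_\lambda,\ldots,(F_{n-1}/\phi)_\lambda\bigr)$, which is the same data as the tuple of indices modulo $\lambda$ appearing in Kummer's conclusion.

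With this dictionary in place, the claim becomes: every element of $\Gal(L/K)$ is the Frobenius of infinitely many prime ideals of $K$, which, because $L/K$ is abelian, is a statement that can be proved by L-series rather than by Chebotarev's full theorem. First I would fix a modulus $\fm$ of $K$ divisible by $\lambda$ and by the primes occurring in the $F_i$, so that each residue symbol $(F_i/\phi)_\lambda$ descends to a character on the ray class group modulo $\fm$. To each character $\chi$ of the subquotient cut out by the symbols I would attach the Hecke-style L-series
$$ L(s,\chi) = \prod_{\phi}\bigl(1 - \chi(\phi)\,N\phi^{-s}\bigr)^{-1}, $$
and then isolate primes with prescribed Frobenius tuple by the orthogonality identity and the asymptotic $\log L(s,\chi) = \sum_\phi \chi(\phi)\,N\phi^{-s} + O(1)$ as $s \to 1^+$, the trivial character producing the dominant $\log\tfrac1{s-1}$ from the simple pole of $\zeta_K(s)$.

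The main obstacle, as in every Dirichlet-style argument, is the nonvanishing $L(1,\chi)\ne 0$ for each nontrivial $\chi$. The cleanest route is to observe that, up to finitely many Euler factors at ramified primes, $\prod_\chi L(s,\chi)$ is the Dedekind zeta function of $L$, whose pole at $s=1$ is simple; hence no $L(s,\chi)$ can vanish there without destroying that pole. For the nonreal characters this is Dirichlet's own device. Since every nontrivial character of $(\Z/\lambda\Z)^n$ takes genuinely complex $\lambda$-th root of unity values once $\lambda>2$, the notoriously delicate ``real character'' case is absent in the interesting cases; when $\lambda=2$ one reduces to the cyclotomic instance of Dirichlet's original nonvanishing, which can be arranged so as not to re-use quadratic reciprocity. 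With nonvanishing in hand, the existence of infinitely many $\phi(\alpha)$ realizing an arbitrary prescription of indices follows in exactly the way the infinitude of primes in an arithmetic progression does from $L(1,\chi)\ne 0$.
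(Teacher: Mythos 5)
Your overall strategy---passing to the Kummer extension $L=K\bigl(\sqrt[\lambda]{F(\alpha)},\ldots,\sqrt[\lambda]{F_{n-1}(\alpha)}\bigr)$, applying Dirichlet's orthogonality to L-series built from power residue characters, and extracting nonvanishing at $s=1$ from the class number formula and a factorization of a Dedekind zeta function into these L-series---is the route the paper indicates. But one step hides a circularity that becomes a genuine gap. You assert that each symbol $\bigl(F_i/\phi\bigr)_\lambda$ ``descends to a character on the ray class group modulo $\fm$,'' and you then aim at the strengthened conclusion that \emph{every} element of $\Gal(L/K)\cong(\Z/\lambda\Z)^n$ is the Frobenius of infinitely many primes. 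The descent of a power residue symbol to a congruence (ray class) character is precisely the $\lambda$-th power reciprocity law for $L/K$---the very theorem these auxiliary-prime lemmas are meant to serve as input to---so it cannot be assumed here; this is the higher-degree analogue of the paper's warning that Dirichlet's Lemma already encodes quadratic reciprocity. Without that descent, the $L(s,\chi)$ attached to symbol characters are not known to be Hecke L-series, and in particular their finiteness at $s=1$, which your argument silently needs in order to pass from ``$\prod_{\chi\ne\chi_0}L(s,\chi)$ has a finite nonzero limit'' to ``no individual factor vanishes,'' is not available. Your remark that the delicate ``real character'' case disappears for $\lambda>2$ points the wrong way: the obstruction for $\lambda>2$ is not the reality of characters but exactly this identification of symbol characters with congruence characters.

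What the poles of the Dedekind zeta functions of $L$ and of its subfields do give, with no reciprocity, is the Frobenius-\emph{division} version of the statement: a positive density of primes whose Frobenius is some \emph{generator} of the cyclic subgroup $\langle\sigma\rangle$ determined by the target tuple. In an abelian group of exponent $\lambda$ the division of $\sigma\neq 1$ is $\{\sigma^k:\ 1\le k\le\lambda-1\}$, so one obtains primes whose indices are \emph{proportional} to the prescribed numbers modulo $\lambda$---which is exactly, and only, what Kummer claims; the word ``proportional'' marks the precise boundary of the method, not a stylistic weakening. (For $\lambda=2$ divisions are singletons, which is why Hilbert's Satz 18 can prescribe exact values.) Note that the paper's own sketch factors the zeta function as $\prod_j L(s,\chi^j)$ over the \emph{powers of a single} character, i.e.\ it works inside the cyclic subextension cut out by the target tuple, which is all the proportional statement requires. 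To repair your proof, restrict the orthogonality to that cyclic group of characters and use only the simple pole of the zeta function of the corresponding degree-$\lambda$ Kummer subextension; the exact (Chebotarev-strength) refinement you state requires either the reciprocity law itself or Chebotarev's resolution of divisions into classes, and should not be claimed from this argument.
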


In Kummer's terminology, $\alpha$ is a primitive $\lambda$-th root
of unity. Kummer distinguished between ideal complex numbers (roughly
corresponding to Dedekind's ideals; see \cite{LJac} for a more
precise explanation of ideal numbers) and real complex numbers
(``wirkliche komplexe Zahlen'', that is, elements of the ring $\Z[\alpha]$),
which he denoted by $F(\alpha)$ even if the numbers were ideal.
The index of $F(\alpha)$ with respect to $\phi(\alpha)$ is the
integer $c$ for which
$$ \Big(\frac{F(\alpha)}{\phi(\alpha)} \Big) = \alpha^c, $$
where the symbol on the left is the $\lambda$-th power residue
symbol in $\Z[\alpha]$ and $c$ is determined modulo $\lambda$.

Translated into modern terms, Kummer's theorem becomes

\begin{thm}\label{TKum}
Let $\zeta$ be a primitive $p$-th root of unity. If
$\alpha_1$, \ldots, $\alpha_r \in \Z[\zeta]$ are independent modulo
$p$-th powers, then for any set of integers $c_1, \ldots, c_r$ there 
is an integer $m$ not divisible by $p$ and infinitely many prime 
ideals $\fp$ in $\Z[\zeta]$ such that 
$$ \Big(\frac{\alpha_1}{\fp}\Big)^m = \zeta^{c_1}, \ldots,
   \Big(\frac{\alpha_r}{\fp}\Big)^m = \zeta^{c_r}. $$
\end{thm}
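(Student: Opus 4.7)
The plan is to prove the stronger form of the theorem, in which $m$ can be taken to be $1$. Set $K = \Q(\zeta)$ and form the Kummer extension
$$ L = K\bigl(\sqrt[p]{\alpha_1},\,\ldots,\,\sqrt[p]{\alpha_r}\bigr). $$
Because $\zeta \in K$ and the $\alpha_i$ are linearly independent in $K^\times/(K^\times)^p$, Kummer theory yields $[L:K] = p^r$ together with an isomorphism $\Gal(L/K) \xrightarrow{\sim} (\Z/p\Z)^r$ sending $\sigma$ to the tuple $(a_1(\sigma),\ldots,a_r(\sigma))$ determined by $\sigma(\sqrt[p]{\alpha_i}) = \zeta^{a_i(\sigma)}\sqrt[p]{\alpha_i}$ for each $i$.

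Next I would translate this Kummer isomorphism into the language of power residue symbols. Let $S$ be the finite set of prime ideals of $K$ dividing $p\alpha_1\cdots\alpha_r$; for any $\fp \notin S$ the extension $L/K$ is unramified at $\fp$, and the Frobenius $\sigma_\fp$ satisfies $\sigma_\fp(\sqrt[p]{\alpha_i}) \equiv \alpha_i^{(N\fp-1)/p}\,\sqrt[p]{\alpha_i} \pmod{\fP}$ for any $\fP \mid \fp$ in $L$. The defining congruence of the $p$-th power residue symbol gives $\alpha_i^{(N\fp-1)/p} \equiv \art{\alpha_i}{\fp} \pmod{\fp}$, and since distinct $p$-th roots of unity remain distinct modulo $\fP$, this forces the equality $\zeta^{a_i(\sigma_\fp)} = \art{\alpha_i}{\fp}$.

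The main step is then to invoke the Chebotarev density theorem for $L/K$: every element of $\Gal(L/K)$ is the Frobenius class of a set of prime ideals of $K$ of density $p^{-r}$, and in particular of infinitely many such primes. Given $(c_1,\ldots,c_r)$, I would pick the $\tau \in \Gal(L/K)$ with $a_i(\tau) = c_i$ for all $i$; any prime $\fp \notin S$ with $\sigma_\fp = \tau$ then satisfies $\art{\alpha_i}{\fp} = \zeta^{c_i}$, and the theorem holds with $m=1$.

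The substantive obstacle is not any single step but the justification of Chebotarev — an input unavailable to Kummer, and the reason his own formulation tolerates a common multiplier $m$. Kummer's analytic route would instead attach a Hecke L-series over $K$ to each nontrivial character of $\Gal(L/K)$ and prove its non-vanishing at $s = 1$, which is ultimately equivalent to the analytic class number formula for $L$; that method controls the Frobenius class only up to a proportionality constant, and the ambiguity represented by $m$ disappears precisely when the full Chebotarev statement becomes available.
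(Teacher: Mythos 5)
Your argument is mathematically sound, but it takes a genuinely different route from the one the paper sketches, and it proves a stronger statement than the one asserted. The paper follows Kummer's own method: one applies Dirichlet's L-series technique directly to the $p$-th power residue characters $\chi$, deduces $L(1,\chi)\ne 0$ from the analytic class number formula together with the factorization of the Dedekind zeta function of the Kummer extension as a product of these L-series, and concludes the existence of the primes. That method (equivalently, the Frobenius density theorem for the abelian extension $L=K(\sqrt[p]{\alpha_1},\ldots,\sqrt[p]{\alpha_r})$) pins down the Frobenius of $\fp$ only up to its \emph{division}, i.e.\ up to replacing it by a power with exponent coprime to $p$ -- and this is precisely the source of the auxiliary exponent $m$ in the statement. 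You instead invoke the full Chebotarev density theorem to prescribe the Frobenius exactly, which lets you take $m=1$; your translation between the Kummer pairing and the symbols $\art{\alpha_i}{\fp}$ via the congruence $\sigma_\fp(\sqrt[p]{\alpha_i})\equiv\alpha_i^{(N\fp-1)/p}\sqrt[p]{\alpha_i}\bmod\fP$ is correct, and you rightly exclude the primes dividing $p\alpha_1\cdots\alpha_r$. What your approach buys is a cleaner and stronger conclusion; what it costs is the use of a theorem proved some seventy years after Kummer (and, as the paper's Table~\ref{T1} makes clear, one whose proof is intimately entangled with Artin's reciprocity law itself), so it cannot serve as a reconstruction of Kummer's argument. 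Your closing paragraph shows you understand this trade-off, which is exactly the point the paper is making by retaining the exponent $m$.
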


The main idea behind Kummer's result is applying Dirichlet's method
to the L-series $L(s,\chi)$, where the $\chi$ are $p$-th power residue
characters. The nonvanishing of $L(1,\chi)$ comes from the analytic
class number formula (which essentially shows that 
$\lim_{s \to 1} (s-1)\zeta_K(s)$ is finite and nonzero) and the
factorization $\zeta_K(s) = \prod_j L(s,\chi^j)$ of ``Dedekind's''
zeta function as a product of L-series. 

\subsection*{Hilbert} 
Hilbert proved the nonvanishing of the L-series needed for proving 
Theorem \ref{TKum} in \cite[Hs. 27]{Hil} and Kummer's theorem in 
\cite[Satz 152]{Hil} of his Zahlbericht. Later, Hilbert worked out
the theory of quadratic extensions of number fields with odd class
number by closely following Kummer's work on Kummer extensions of 
the fields of $p$-th roots of unity for regular primes $p$. In 
particular, Hilbert needed the following existence theorem, which
can be found in \cite[Satz 18]{Hil2}:

\begin{thm}\label{THil2}
Let $K$ be a number field, and assume that 
$\alpha_1, \ldots, \alpha_s \in K^\times$ are independent modulo squares.
Then for any choice of $c_1, \ldots, c_s \in \{\pm 1\}$, there
exist infinitely many prime ideals $\fp$ in $K$ with
$$ \Big( \frac{\alpha_1}{\fp} \Big) = c_1, \quad \ldots, \quad
   \Big( \frac{\alpha_s}{\fp} \Big) = c_s. $$
\end{thm}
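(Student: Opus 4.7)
The plan is to reduce Theorem \ref{THil2} to the nonvanishing of quadratic Hecke $L$-series at $s=1$, in close analogy with Kummer's argument for Theorem \ref{TKum}. Set $L = K(\sqrt{\alpha_1}, \ldots, \sqrt{\alpha_s})$. Kummer theory, together with the assumption that the $\alpha_i$ are independent modulo squares in $K^\times$, gives $[L:K] = 2^s$ and $\Gal(L/K) \cong (\Z/2\Z)^s$, with the Frobenius $\sigma_\fp$ of an unramified prime $\fp$ acting by $\sqrt{\alpha_i} \too (\alpha_i/\fp)\sqrt{\alpha_i}$. Prescribing the symbols $(\alpha_i/\fp)=c_i$ is therefore equivalent to prescribing the Frobenius class of $\fp$ in $\Gal(L/K)$, so the theorem amounts to an equidistribution statement for Frobenius elements.

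Parametrize the characters of $\Gal(L/K)$ by subsets $I \subseteq \{1,\ldots,s\}$ via $\chi_I(\fp) = \prod_{i \in I}(\alpha_i/\fp)$, and set $\beta_I = \prod_{i \in I}\alpha_i$. Form
$$ F_I(s) \;=\; \sum_\fp \chi_I(\fp)\, N\fp^{-s}, $$
the sum running over primes of $K$ coprime to $2\prod_i \alpha_i$. Orthogonality of characters of $(\Z/2\Z)^s$ yields
$$ \sum_\fp \Big(\prod_{i=1}^s \frac{1+c_i (\alpha_i/\fp)}{2}\Big) N\fp^{-s} \;=\; \frac{1}{2^s}\sum_I \Big(\prod_{i\in I} c_i\Big) F_I(s), $$
which isolates exactly the primes with the prescribed signs. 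Once I know that $F_\emptyset(s) = \sum_\fp N\fp^{-s}$ diverges like $\log\frac{1}{s-1}$ as $s\to 1^+$ while $F_I(s)$ stays bounded for every $I \ne \emptyset$, the primes with the desired behavior have positive Dirichlet density $2^{-s}$, and in particular there are infinitely many of them.

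The divergence of $F_\emptyset(s)$ is immediate from the simple pole of $\zeta_K$ at $s=1$. The substantive step, and the main obstacle, is proving that the quadratic $L$-series
$$ L(s,\chi_I) \;=\; \prod_\fp \big(1 - \chi_I(\fp) N\fp^{-s}\big)^{-1} $$
extends continuously to $s=1$ with $L(1,\chi_I)\neq 0$ for every nonempty $I$; taking logarithms then converts this into the boundedness of $F_I(s)$ at $s=1$ up to a convergent error. I would identify $\chi_I$ with the quadratic character cut out by the quadratic extension $K(\sqrt{\beta_I})/K$, which is a genuine quadratic extension because $\beta_I$ is not a square in $K^\times$ by independence of the $\alpha_i$. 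One then has the factorization $\zeta_{K(\sqrt{\beta_I})}(s) = \zeta_K(s)\,L(s,\chi_I)$, and the analytic class number formula supplies simple poles with nonzero residues at $s=1$ for both Dedekind zeta functions; hence the ratio $L(1,\chi_I)$ is finite and nonzero.

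The technically delicate point hidden in this outline is verifying that the products $\chi_I$ really define Hecke characters modulo a suitable conductor dividing the discriminant of $K(\sqrt{\beta_I})/K$, so that both the Euler product and the zeta-function factorization are legitimate away from a finite set of exceptional primes. This is exactly the content of Hilbert's Hilfssatz 27 in the Zahlbericht, which was already invoked in the proof of Theorem \ref{TKum} and which I simply quote. Once it is granted, the argument parallels Kummer's line by line, with the $p$-th power characters replaced by quadratic ones; no case distinction between real and complex characters is needed since every $\chi_I$ is already real.
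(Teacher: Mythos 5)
Your proposal is correct and follows essentially the same route the paper indicates: reducing the statement to the nonvanishing at $s=1$ of the quadratic $L$-series attached to the multiquadratic extension $K(\sqrt{\alpha_1},\ldots,\sqrt{\alpha_s})/K$, which the paper derives from Dedekind's class number formula (the simple pole of $\zeta_K$ with nonzero residue) exactly as in Kummer's argument for Theorem \ref{TKum}. You have merely written out in full the character-orthogonality and zeta-factorization details that the paper leaves implicit, so there is nothing to correct.
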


The nonvanishing of the corresponding L-series is an immediate
consequence of Dedekind's class number formula, according to which
$\lim_{s \to 1} \zeta_K(s) \ne 0$. The special case in which $K$ is a 
multiquadratic extension of $\Q$ is also the result that Kummer
alluded to when he stated that the existence of his auxiliary 
primes can be verified using Dirichlet's methods. One should, 
however, bear in mind that Kummer did not have the theory of ideals
(or ideal numbers) in quadratic number fields at his disposal; 
this was worked out later by Dedekind.

In the special case $K = \Q$, $\alpha_1 = -1$ and $\alpha_2 = p$,
Thm. \ref{THil2} is exactly Legendre's Lemma \ref{LLeg2}.

Similar results were used by Furtw\"angler\footnote{In \cite[Satz 17]{Fu2},
Furtw\"angler sketches the proof, in most other places he simply 
refers to Hilbert's Bericht.} and Takagi\footnote{See in particular
\cite[\S\ 3]{Tak}.} in their proofs of the main results of class field 
theory.

\section{Kronecker, Frobenius, Chebotarev}

The simplest case of Kummer's existence theorem claims that
in Kummer extensions $L/K$ of $K = \Q(\zeta_p)$, there are
infinitely many prime ideals that split completely and infinitely
many prime ideals that remain inert. A more general result (although
initially restricted to the case where the base field is $\Q$) was 
stated by Kronecker:

\begin{thm}\label{TKron}
Let $L$ be a number field. Then there exist infinitely many prime
ideals of inertia degree $1$ in $L$. More exactly, the Dirichlet density
of such primes is $1/(N:\Q)$, where $N$ is the normal closure of
$L/\Q$.
\end{thm}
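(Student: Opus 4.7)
The plan is to reduce to the Galois case and exploit the simple pole of the Dedekind zeta function at $s=1$. Let $N$ be the normal closure of $L/\Q$, set $G=\Gal(N/\Q)$, and write $n=[N:\Q]$. Since $L\subseteq N$, a rational prime $p$ splits completely in $L$ if and only if it splits completely in $N$, in which case each of the $[L:\Q]$ primes of $L$ above $p$ has inertia degree $1$. It therefore suffices to show that the Dirichlet density of rational primes splitting completely in $N$ equals $1/n$.

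To compute that density, I would take logarithms of the Euler product for $\zeta_N(s)$:
\[
 \log \zeta_N(s) \;=\; \sum_{P} \sum_{k\ge 1} \frac{1}{k\, N(P)^{ks}}.
\]
The contributions coming from higher powers ($k\ge 2$) and from primes $P$ of residue degree $f_P\ge 2$ are both bounded as $s\to 1^{+}$, since each is dominated by $n\sum_p p^{-2s}$. Thus
\[
 \log \zeta_N(s) \;=\; \sum_{\substack{P\\ f_P=1}} N(P)^{-s} \;+\; O(1).
\]
Because $N/\Q$ is Galois, every prime $P$ with $f_P=1$ lies above a rational prime $p\in\Spl(N/\Q)$, and each such $p$ has exactly $n$ primes of $N$ above it, each of norm $p$. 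The main sum therefore collapses to $n\sum_{p\in\Spl(N/\Q)} p^{-s}$.

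The last ingredient is Dedekind's analytic class number formula, which provides a simple pole of $\zeta_N(s)$ at $s=1$ and hence $\log\zeta_N(s)\sim\log\frac{1}{s-1}$ as $s\to 1^{+}$. Combining the two asymptotics gives
\[
 \sum_{p\in\Spl(N/\Q)} p^{-s} \;\sim\; \frac{1}{n}\,\log\frac{1}{s-1},
\]
which is precisely the statement that $\Spl(N/\Q)$ has Dirichlet density $1/n$. Each such rational prime supplies $[L:\Q]$ prime ideals of $L$ of inertia degree $1$, so in particular there are infinitely many.

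The only serious analytic input is the simple pole of $\zeta_N(s)$ at $s=1$, the same fact already invoked for Theorem \ref{TKum} above; the rest is a routine Euler-product calculation. The substantive conceptual step is the Galois reduction in the first paragraph, which is what lets the density be read off the pole of $\zeta_N(s)$ alone, without needing the nonvanishing of Artin $L$-series attached to nontrivial characters of $G$ (that would be required only for the finer Frobenius/Chebotarev refinement).
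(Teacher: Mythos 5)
Your argument is correct. Note that the paper, being a historical survey, gives no proof of Theorem \ref{TKron} at all; it only indicates, in the surrounding discussion of Theorems \ref{TKum} and \ref{TFrAb}, that the analytic input is the finiteness and nonvanishing of $\lim_{s\to 1}(s-1)\zeta_N(s)$, which is exactly the ingredient you invoke, so your route is the intended one. Two small points are worth making explicit. First, in a Galois extension a \emph{ramified} prime can have residue degree $1$ without splitting completely, so identifying the degree-one primes of $N$ with the primes above $\Spl(N/\Q)$ is valid only up to the finitely many ramified primes; this is harmlessly absorbed into your $O(1)$, but say so. Second, your reading of ``the density of such primes'' as the density of rational primes splitting completely in $L$ (equivalently, in $N$) is the only reading consistent with the stated value $1/(N:\Q)$ and with the quadratic example the paper gives immediately afterwards: for non-normal $L$ the set of rational primes lying under \emph{some} degree-one prime of $L$ is strictly larger, its density being the proportion of elements of $\Gal(N/\Q)$ fixing a coset of $\Gal(N/L)$, so the literal wording of the theorem should not be taken at face value. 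Your Galois reduction in the first paragraph is precisely what allows the density to be read off from the pole of $\zeta_N$ alone, without any nonvanishing of $L$-series attached to nontrivial characters.
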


Applied to quadratic extensions $L = \Q(\sqrt{m}\,)$, this implies 
the existence of infinitely many primes $p$ with $(\frac mp) = +1$,
and in fact half the primes have this property; consequently, there
also must be infinitely many primes with $(\frac mp) = -1$.

Similarly, applied to  $L = \Q(\zeta_m)$ Kronecker's density theorem
predicts the existence of infinitely many primes $p \equiv 1 \bmod m$. 
Both of these special cases can actually be proved by elementary means

Kronecker's density theorem was generalized immediately by 
Frobenius \cite{Fro}. The following special case of Frobenius'
result can be stated in the same language as Kronecker's:

\begin{thm}[Frobenius Density Theorem for Abelian Extensions]\label{TFrAb}
Let $L/K$ be an abelian extension, and $F$ an intermediate field
such that $L/F$ is cyclic. Then the set $S_F$ of prime ideals with
decomposition field $F$ has Dirichlet density
$$ \delta(S_F) = \frac{\phi(L:F)}{(L:K)}. $$
\end{thm}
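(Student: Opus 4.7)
The plan is to reduce the statement to a counting problem for Frobenius elements, and then to invoke the density formula for individual Frobenius classes in an abelian extension. Since $L/K$ is abelian, the decomposition groups of all primes of $L$ above a prime $\fp$ of $K$ unramified in $L$ coincide; this common subgroup is the cyclic group $\la \sigma_\fp \ra$ generated by the Frobenius, and its fixed field is by definition the decomposition field of $\fp$. Hence $\fp \in S_F$ if and only if $\la \sigma_\fp \ra = \Gal(L/F)$, i.e., if and only if $\sigma_\fp$ is a generator of the cyclic group $\Gal(L/F)$.

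The cyclic group $\Gal(L/F)$ has order $n := (L:F)$ and therefore contains exactly $\phi(n)$ generators. Granting, for each fixed $\sigma \in \Gal(L/K)$, the density formula
$$ \delta\bigl(\{\fp : \sigma_\fp = \sigma\}\bigr) = \frac{1}{(L:K)}, $$
the theorem follows at once: the (finitely many) ramified primes contribute density zero, and summing over the $\phi(n)$ generators of $\Gal(L/F)$ gives the claimed density $\phi(L:F)/(L:K)$.

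The real content is therefore the density $1/(L:K)$ for a single Frobenius class in an abelian extension, which is precisely the generalization of Dirichlet's Theorem \ref{TPAP} from the cyclotomic case $K = \Q$, $L = \Q(\zeta_m)$ to arbitrary abelian $L/K$. Following Dirichlet's template, one introduces the characters $\chi$ of $\Gal(L/K)$ with associated L-series $L(s,\chi) = \prod_\fp (1 - \chi(\fp) N\fp^{-s})^{-1}$; orthogonality isolates the Frobenius class $\sigma$ as a linear combination of $\log L(s,\chi)$ near $s = 1$, the trivial character supplying the main term $\tfrac{1}{(L:K)}\log\tfrac{1}{s-1}$ coming from the pole of $\zeta_K$. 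The hard step, as always in this circle of ideas, is the nonvanishing $L(1,\chi) \ne 0$ for nontrivial $\chi$; for abelian $L/K$ this is secured in one blow by the factorization $\zeta_L(s) = \prod_\chi L(s,\chi)$ together with Dedekind's class number formula, which guarantees that $\lim_{s \to 1}(s-1)\zeta_L(s)$ is finite and nonzero, so that no factor $L(s,\chi)$ can vanish at $s = 1$.
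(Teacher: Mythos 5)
Your reduction of the theorem to the statement that each \emph{individual} Frobenius class in an abelian extension has density $1/(L:K)$ is where the argument breaks down: that statement is the abelian case of Chebotarev's density theorem (Theorem \ref{TCD}), which is strictly stronger than what is to be proved and is precisely what Frobenius himself could not reach. The theorem at hand only asks for the density of primes whose Frobenius \emph{generates} the cyclic group $H = \Gal(L/F)$ --- in the language of the paper, whose Frobenius lies in the division $\Dv(\sigma)$ of a generator $\sigma$ of $H$ --- and this weaker statement is exactly what Dedekind's density theorem can deliver. Your sketched proof of the stronger statement also has a concrete analytic gap: from the factorization of $\zeta_L(s)$ as $\zeta_K(s)\prod_{\chi\neq 1}L(s,\chi)$ (up to finitely many Euler factors) and the simple poles of $\zeta_L$ and $\zeta_K$ one only learns that the \emph{product} $\prod_{\chi\neq 1}L(s,\chi)$ tends to a finite nonzero limit as $s\to 1$; to conclude that no single factor vanishes one must first know that every $L(s,\chi)$ is regular, i.e.\ has a finite limit, at $s=1$. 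For Dirichlet $L$-series this regularity comes from Abel summation with bounded character sums, but for the series $\prod_\fp(1-\chi(\sigma_\fp)N\fp^{-s})^{-1}$ attached to a general abelian extension $L/K$ no such bound is available without Artin's reciprocity law (or Chebotarev's abelian twists); what the zeta functions of intermediate fields control are only the subproducts $\prod_{\chi\in X}L(s,\chi)$ taken over subgroups $X$ of the character group, never an individual nontrivial $\chi$ of order greater than $2$.

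The proof the paper points to avoids characters altogether. For each subgroup $H'\subseteq H$ with fixed field $F'$, the unramified primes $\fp$ of $K$ with $\sigma_\fp\in H'$ are exactly those splitting completely in $F'$, and by Dedekind's density theorem (the simple pole of $\zeta_{F'}$, equivalently the density of degree-one primes of $F'$) this set has density $1/(F':K)=\#H'/(L:K)$. Since $\la\sigma_\fp\ra=H$ holds iff $\sigma_\fp\in H$ while $\sigma_\fp$ lies in no proper subgroup of $H$, M\"obius inversion over the divisors of $f=(L:F)$ gives
$$ \delta(S_F)=\frac{1}{(L:K)}\sum_{d\mid f}\mu(f/d)\,d=\frac{\phi(f)}{(L:K)}. $$
Your opening paragraph --- the identification of $S_F$ with the primes whose Frobenius generates $\Gal(L/F)$, and the count of $\phi(L:F)$ generators --- is correct and is the right starting point; it is the further passage from the set of generators down to single elements that must be undone.
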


The main idea behind this special case is again Dedekind's density
theorem. Applied to the biquadratic number field 
$L = \Q(\sqrt{-1},\sqrt{p}\,)$ and the subextension
$F = \Q(\sqrt{-p}\,)$, Thm. \ref{TFrAb} guarantees the existence
of infinitely many primes $q$ with $(\frac qp) = +1$ and 
$(\frac{-1}q) = (\frac pq) = -1$. In particular, it implies
Legendre's Lemma \ref{LLeg2}.

For generalizing his result to general normal extensions, Frobenius 
had to introduce the notion of a division: the division $\Dv(\phi)$ 
of an element $\phi \in G$ is the set of all $\sigma \in G$ with the 
property that $\sigma = \tau^{-1} \phi^k \tau$ for some $\tau \in G$ 
and an exponent $k$ coprime to the order of $\sigma$. Modulo some
group theoretical preliminaries, the following result is then 
rather easily proved:

\begin{thm}[Frobenius Density Theorem]\label{TFD}
Let $L/K$ be a normal extension, and let $D$ be a division in 
$G = \Gal(L/K)$. Let $S$ denote the set of unramified prime 
ideals $\fp$ in $K$ with the property that the prime ideals 
$\fP$ above $\fp$ in $L$ satisfy $\frb{L/K}{\fP} \in D$. Then 
$S$ has Dirichlet density
$$ \delta(S) = \frac{\# D}{\# G}. $$
\end{thm}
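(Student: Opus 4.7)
Given a division $D = \Dv(\phi)$, set $n = \mathrm{ord}(\phi)$, $H = \la\phi\ra$, and $F = L^H$; then $L/F$ is cyclic of degree $n$ with Galois group $H$. My plan is to apply Theorem \ref{TFrAb} to the abelian extension $L/F$ (taking the intermediate field to be $F$ itself), which yields that the set $T_F$ of primes $\fq$ of $F$ inert in $L$ has Dirichlet density $\phi(n)/n$ in $F$ (with $\phi(n)$ Euler's totient), and then to transfer this density down to $K$ by a bookkeeping argument on the tower $L/F/K$.

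To execute the transfer, let $T_K$ denote the set of unramified primes $\fp$ of $K$ with $\frb{L/K}{\fP} \in D$. For each $\fp$, primes of $L$ above $\fp$ biject with cosets $\rho D_\fP$ via $\rho D_\fP \leftrightarrow \rho\fP$, and $D_{\rho\fP} = \rho D_\fP \rho^{-1}$. A prime $\fq$ of $F$ above $\fp$ is inert in $L/F$ exactly when the unique $\fP \mid \fq$ satisfies $D_\fP \supseteq H$, and the tower identity $|D_\fP| = f(\fq/\fp)\cdot [L:F]$ shows that $D_\fP = H$ forces $f(\fq/\fp) = 1$ (so $N\fq = N\fp$ and $\fp \in T_K$), while the case $D_\fP \supsetneq H$ gives $f(\fq/\fp) \ge 2$ and so contributes only $O(1)$ to $\sum N\fq^{-s}$ as $s \to 1^+$. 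An orbit-stabilizer count for conjugation then shows that, whenever $D_\fP$ is conjugate to $H$, there are exactly $[N_G(H):H]$ cosets $\rho D_\fP$ with $\rho D_\fP \rho^{-1} = H$. Summing over $\fq \in T_F$,
\[
\sum_{\fq \in T_F} \frac{1}{N\fq^s}
 \;=\; [N_G(H):H] \sum_{\fp \in T_K} \frac{1}{N\fp^s} \;+\; O(1),
\]
and comparing asymptotics yields
\[
\delta(T_K) \;=\; \frac{\phi(n)/n}{[N_G(H):H]}
 \;=\; \frac{\phi(n)\,[G:N_G(H)]}{|G|} \;=\; \frac{\# D}{\# G},
\]
the last identity because $D$ is the disjoint union, over the $[G:N_G(H)]$ conjugates of $H$, of their respective $\phi(n)$ generators.

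The main obstacle is the combinatorial translation in the second step: identifying inertia of $\fq$ in $L/F$ with the group-theoretic condition $D_\fP \supseteq H$, performing the orbit-stabilizer count of cosets $\rho D_\fP$ with $\rho D_\fP \rho^{-1} = H$, and discarding the lower-order contribution from $D_\fP \supsetneq H$. Once this dictionary between primes of $F$ and primes of $K$ is in place, the density statement reduces formally to the cyclic density already recorded in Theorem \ref{TFrAb}.
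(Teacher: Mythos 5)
Your proof is correct: the reduction of the general normal case to the cyclic case via $F = L^{\langle\phi\rangle}$, the identification of inert primes of $F$ with the condition $D_\fP \supseteq H$, the $[N_G(H):H]$-to-one coset count over each $\fp \in T_K$, the $O(1)$ disposal of the case $D_\fP \supsetneq H$, and the final identity $\#D = \phi(n)\,[G:N_G(H)]$ all check out. The paper itself states the theorem without proof, remarking only that it follows ``modulo some group theoretical preliminaries'' from the abelian/cyclic case (Theorem \ref{TFrAb}); your argument supplies exactly those preliminaries and is thus the intended approach.
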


The Frobenius density theorem is not the best we can hope for: 
in fact it does not even contain Dirichlet's theorem on primes
in arithmetic progression. Since the Frobenius automorphism of 
a prime ideal $\fp$ is determined up to conjugacy, the best possible 
density result would predict the infinitude of prime ideals whose 
Frobenius automorphism lies in some conjugacy class (in fact, 
divisions are unions of conjugacy classes). 

Frobenius' density theorem with divisions replaced by conjugacy classes
contains Dirichlet's Theorem \ref{TPAP} as a special case and was
stated as a conjecture by Frobenius at the end of his article \cite{Fro}.
For stating it, let $[\sigma]$ denote the conjugacy class of $\sigma \in G$:

\begin{thm}[Chebotarev's Density Theorem]\label{TCD}
Let $K/\Q$ be a normal extension, and fix a $\sigma \in G = \Gal(K/\Q)$.
Let $S$ denote the set of unramified primes $p$ with the property
that the prime ideals $\fp$ above $p$ in $K$ satisfy
$\frb{K/\Q}{\fp} \in [\sigma]$. Then $S$ has Dirichlet density
$$ \delta(S) = \frac{\# [\sigma]}{\# G}. $$
\end{thm}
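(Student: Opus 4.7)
The approach is to deduce Chebotarev from Frobenius (Theorem \ref{TFD}) via the classical cyclotomic reduction, which introduces an auxiliary cyclic extension to separate the $\varphi(m)$ distinct conjugacy classes that make up a single division.

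Fix $\sigma \in G$ of order $m$. Frobenius's theorem applied directly to $K/\Q$ only determines the density of primes whose Frobenius lies in the division $\Dv(\sigma)$, which is the disjoint union of the conjugacy classes $[\sigma^k]$ for $k \in (\Z/m\Z)^\times$. All these classes have the same cardinality since $C_G(\sigma) = C_G(\sigma^k)$, so the obstruction is to show that each of them receives equal density $\#[\sigma]/\#G$.

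First I would choose an auxiliary rational prime $\ell \equiv 1 \bmod m$, unramified in $K/\Q$ and satisfying $K \cap \Q(\zeta_\ell) = \Q$. Let $F$ be the unique subfield of $\Q(\zeta_\ell)$ with $[F:\Q] = m$, and fix a generator $\tau$ of $\Gal(F/\Q)$. Set $M = K \cdot F$, so that $\Gal(M/\Q) \cong G \times \langle \tau \rangle$. The element $\widetilde\sigma := (\sigma, \tau)$ again has order $m$, and I let $\widetilde H = \langle \widetilde\sigma \rangle$, with fixed field $N = M^{\widetilde H}$. Applying Theorem \ref{TFD} to the cyclic extension $M/N$, the set of primes $\mathfrak n$ of $N$ whose Frobenius in $\Gal(M/N)$ lies in $\Dv_{\widetilde H}(\widetilde\sigma)$ has Dirichlet density $\varphi(m)/m$. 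Descending this back to primes of $\Q$: a rational prime $p$ unramified in $M$ has the Frobenius of any prime above it in $\Gal(M/\Q)$ conjugate to $(\sigma^k, \tau^k)$ with $\gcd(k,m) = 1$ precisely when $p \bmod \ell$ corresponds to $\tau^k$, which pins down $k$ by a single residue class modulo $\ell$. Splitting the set provided by Frobenius on $M/N$ according to this value of $k$, and using that conjugacy in the product group $G \times \langle\tau\rangle$ acts only on the $G$-component, isolates, for each fixed $k$, exactly the primes $p$ whose Frobenius on $K/\Q$ lies in $[\sigma^k]$. Taking $k=1$ then yields density $\#[\sigma]/\#G$.

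The main obstacle is the bookkeeping in this translation step: one must match up the density of primes of $N$ (given by Frobenius on $M/N$) with the density of rational primes, weighted by conjugacy class, on $K/\Q$, and verify that the outcome is independent of the auxiliary prime $\ell$. This reduces to a finite combinatorial count inside $G \times \langle \tau \rangle$: each prime $p$ with Frobenius in $[\sigma^k]_G$ contributes a predictable number of primes above it in $N$ whose Frobenius in $M/N$ is $\widetilde\sigma^k$, and summing over $k \in (\Z/m\Z)^\times$ must recover the density $\varphi(m) \cdot \#[\sigma]/\#G$ for the full division $\Dv(\sigma)$ predicted by Frobenius on $K/\Q$. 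Equality of these two expressions, combined with the equidistribution across the residue classes modulo $\ell$ (which separates the $\varphi(m)$ classes), forces the density of each individual $[\sigma^k]$ to equal $\#[\sigma]/\#G$, as claimed.
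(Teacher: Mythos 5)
The paper itself offers no proof of Theorem \ref{TCD}: it only records the five-step strategy of the Ribenboim/Chebotarev argument in Table \ref{T1} and points to the technique of abelian twists. Your sketch follows that strategy in outline, and the construction $M=KF$, $\widetilde\sigma=(\sigma,\tau)$, $N=M^{\langle\widetilde\sigma\rangle}$ is the right one; but two of the steps you defer as ``bookkeeping'' are genuine gaps. First, applying Theorem \ref{TFD} to the cyclic extension $M/N$ tells you only the density of the primes of $N$ whose Frobenius is \emph{some} generator $\widetilde\sigma^{k}$ of $\Gal(M/N)$; a division theorem is structurally incapable of separating $\widetilde\sigma$ from $\widetilde\sigma^{k}$, which is exactly the difficulty you started with, transported one level up. The whole point of arranging $M=NF$ with $F\subseteq\Q(\zeta_\ell)$ is that $M/N$ is then a \emph{cyclotomic} extension of $N$, so that the cyclotomic case of the density theorem (steps 1--3 of Table \ref{T1}, i.e.\ Dirichlet's Theorem \ref{TPAP} over the base field $N$) identifies the density of primes with Frobenius equal to the specific element $\widetilde\sigma$ as $1/m$. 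Your appeal to ``equidistribution across the residue classes modulo $\ell$'' is precisely this cyclotomic input; as written you never invoke it, and Theorem \ref{TFD} alone cannot supply it.

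Second, even after the descent you have only computed the density of the set of $p$ with Frobenius in $[\sigma]$ on $K/\Q$ \emph{and} Frobenius equal to $\tau$ on $F/\Q$; the extraneous congruence condition modulo $\ell$ must be removed. To obtain $\delta(S)$ you must sum the densities of the sets $\{p: \frb{K/\Q}{\fp}\in[\sigma],\ p\mapsto\tau^{j}\ \text{in}\ \Gal(F/\Q)\}$ over \emph{all} $j\bmod m$, and for $j$ with $\gcd(j,m)>1$ (in particular $j=0$) the element $(\sigma,\tau^{j})$ generates a subgroup meeting $G\times 1$ nontrivially, so $M$ is no longer cyclotomic over the corresponding fixed field and the twist breaks down. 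The standard proofs close this either by an induction on the order of $\sigma$ (treating non-generators via the subextension they generate) or by a more careful choice of the auxiliary cyclic group; your ``finite combinatorial count'' does not address it. A smaller point: the classes $[\sigma^{k}]$ for $k\in(\Z/m\Z)^{\times}$ need not be distinct, so $\#\Dv(\sigma)$ may be strictly smaller than $\varphi(m)\,\#[\sigma]$, which upsets the consistency check with which you intend to conclude.
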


The problem that Frobenius was unable to solve was that of resolving
the divisions into conjugacy classes. This was accomplished by Chebotarev
\cite{Cheb} using the technique of abelian twists. Already Hilbert, in his 
proof of the theorem of Kronecker and Weber, had introduced this technique: 
given a cyclic extension $K/\Q$, there is a cyclotomic extension $L/\Q$
such that the compositum $KL$ contains a cyclic extension $M/\Q$ with a
Galois group isomorphic to $K/\Q$ but with less ramification. 

\subsection*{Chebotarev and Artin}
Artin, who had conjectured his reciprocity law in 1923, had faced a 
problem similar to that of Frobenius. When he saw Chebotarev's article
\cite{Cheb}, Artin immediately suspected that this paper would hold the
key for a proof of his reciprocity law, and it did. In order to convince
the reader of the deep connection between the proofs, let us compare
the proofs of Chebotarev's density theorem as given by Ribenboim 
\cite[\S\ 25.3]{Rib} with the exposition of Artin's reciprocity law as 
given by Childress \cite{Child}:

\begin{table}[h!]
\begin{tabular}{|p{7.3cm}|l|l|} \hline 
       Step  &  Chebotarev & Artin \\ \hline
     1. The result holds if $K = \Q$ and $L/\Q$ 
     is a cyclotomic extension. This case follows from 
     Dirichlet's Theorem \ref{TPAP} (Chebotarev) and
     the irreduciblity of the cyclotomic equation (Dedekind;
     see the subsequent article \cite{Lem2}). 
     & \cite[p. 554]{Rib}       &    \cite[p. 112]{Child} \\
     2. The result holds for general base fields $K$ if $L = K(\zeta)$ is 
      a cyclotomic extension.
     & \cite[p. 554]{Rib}       &  \cite[p. 113]{Child} \\ 
     3. The result holds for general base fields $K$ if $L \subseteq K(\zeta)$ 
      is a subextension of a cyclotomic extension.
       & \cite[p. 558]{Rib}     & \cite[Ex. 5.6]{Child} \\ 
     4. The result holds for arbitrary cyclic extensions $L/K$. This step
      uses the technique of abelian twisting.
       &   \cite[p. 558]{Rib}   & \cite[p. 114 ff]{Child}  \\
     5. The result holds for general normal (Chebotarev) resp. abelian
        (Artin) extensions.
       & \cite[p. 561]{Rib}     &  \cite[p. 114]{Child} \\ \hline
   \end{tabular}\smallskip
\caption{Chebotarev's Density Theorem and Artin's Reciprocity Law}
\label{T1}
\end{table}

For proving step 4, Artin needed, in addition to Chebotarev's ideas,
auxiliary primes. These will be discussed in the next two sections.

In \cite{Art1}, where Artin conjectured his reciprocity law, he was
able to prove it for cyclic extensions of prime degree using the 
known reciprocity laws due to Kummer, Furtw\"angler and Takagi. Even
this proof already has a structure similar to the one above:
\begin{enumerate}
\item The Artin map sends the principal class (and only this class) to 
      the trivial automorphism.
\item If the reciprocity law holds for $K/k$, then it holds for ervery
      subextension $F/k$.
\item If the reciprocity law holds for $K_1/k$ and $K_2/k$, then it
      holds for the compositum $K_1K_2/k$.
\item The reciprocity law holds for cyclotomic extensions $K = k(\zeta)$.
\item If $k$ contains the $\ell^n$-th roots of unity, then the reciprocity
      law holds for cyclic extensions $K/k$ of degree $\ell^n$. Artin 
      derives this result from a Takagi's general reciprocity law, which
      was known to hold only for extensions of prime degree $\ell$, i.e.,
      for $n = 1$.
\item Let $K/k$ be a cyclic extension of prime power degree $\ell^n$,
      $\zeta$ a primitive $\ell^n$-th root of unity, and let 
      $k' = k(\zeta)$ and $K' = K(\zeta)$. If the reciprocity law holds
      for $K'/k'$, then it also holds for $K/k$.
\end{enumerate}

\section{Artin's Reciprocity Law}

The existence of the auxiliary primes necessary for Step 4 of Artin's 
proof is secured by the following lemma (Hilfssatz 1 in \cite{Art2}):

\begin{lem}
Let $f$ be a positive integer, and let $p_1$ and $p_2$ be primes.
Then there exist infinitely many primes $q$ with the following
property: the group $(\Z/q\Z)^\times$ has a subgroup $H$ such that
$p_1H = p_2H$, and the coset $p_1H$ has order divisible by $f$.
\end{lem}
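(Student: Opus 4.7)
The plan is to rephrase the existence of $H$ as a Frobenius splitting condition in a Kummer tower over $K = \Q(\zeta_f)$, and then invoke Chebotarev's density theorem (Thm.~\ref{TCD}).

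First, in the generic case take $H = ((\Z/q\Z)^\times)^f$, which requires $f \mid q-1$. The quotient $(\Z/q\Z)^\times/H$ is then cyclic of order $f$, so $p_1H = p_2H$ becomes ``$p_1/p_2$ is an $f$-th power modulo $q$,'' and $p_1H$ has order divisible by $f$ exactly when it generates the quotient, i.e., when $p_1$ is not an $\ell$-th power modulo $q$ for any prime $\ell \mid f$. The lemma thus reduces to exhibiting infinitely many primes $q$ with (i) $q \equiv 1 \pmod f$, (ii) $p_1/p_2 \in ((\Z/q\Z)^\times)^f$, and (iii) $p_1 \notin ((\Z/q\Z)^\times)^\ell$ for every prime $\ell \mid f$.

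Second, put $L = K(\sqrt[f]{p_1/p_2})$, $M_\ell = K(\sqrt[\ell]{p_1})$ for each prime $\ell \mid f$, and $N = L \cdot \prod_{\ell \mid f} M_\ell$, a finite abelian extension of $K$. Condition (i) says $q$ splits completely in $K/\Q$; for such $q$ coprime to $f p_1 p_2$ and $\fq \mid q$ a prime of $K$, Kummer theory translates (ii) and (iii) into: $\fq$ splits completely in $L/K$ but in none of the $M_\ell/K$. Equivalently, $\frb{N/K}{\fq}$ lies in $T := \Gal(N/L) \smallsetminus \bigcup_{\ell \mid f} \Gal(N/M_\ell) \subseteq \Gal(N/K)$. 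By Chebotarev's theorem applied to $N/K$, infinitely many such $\fq$ exist (and the rational primes $q$ below them fulfill (i)--(iii)) provided $T \neq \emptyset$.

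Third --- and this is the main obstacle --- nonemptiness of $T$ is a Kummer-theoretic independence check. By Kummer duality
\[
  \Gal(N/K) \;\cong\; \operatorname{Hom}\bigl(W,\mu_f\bigr),\qquad
  W = \bigl\langle p_1/p_2,\; p_1^{f/\ell} : \ell \mid f \bigr\rangle \subseteq K^\times/(K^\times)^f,
\]
and $T$ corresponds to characters of $W$ that are trivial on $p_1/p_2$ but nontrivial on each $p_1^{f/\ell}$. Passing to $A := W/\langle p_1/p_2\rangle$, it suffices to construct a character of $A$ nontrivial on the image of $p_1^{f/\ell}$ for every prime $\ell \mid f$. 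Since that image has $\ell$-power order and the primes $\ell \mid f$ are distinct, the images lie in distinct primary components of $A$, so the desired character can be assembled one primary component at a time, provided each image is nontrivial in $A$. This reduces to showing, for each $\ell$, that $p_1^{f/\ell} \notin \langle p_1/p_2\rangle \cdot (K^\times)^f$. When $p_1 \neq p_2$ and neither is ramified in $K$, a short valuation argument at primes of $\cO_K$ above $p_1$ and $p_2$ suffices: the valuation at a prime over $p_2$ forces the $p_1/p_2$-exponent in any putative relation to vanish modulo $f$, after which the valuation at a prime over $p_1$ gives $f/\ell \equiv 0 \pmod f$, a contradiction. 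The residual exceptional configurations ($p_1 = p_2$, in which case condition (ii) is vacuous and the argument reduces to the $M_\ell$ alone, or $p_1$ happening to be an $\ell$-th power in $K$, in which case one replaces $H$ by a subgroup of larger index) are handled by the same circle of ideas. With $T \neq \emptyset$ secured, Chebotarev delivers the required infinitude of primes $q$.
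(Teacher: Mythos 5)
Your proposal should first be measured against what the paper actually does here: the paper never proves this lemma in its stated generality. It records it as Artin's Hilfssatz 1 and then proves only the strengthened special case $f=2$ (with $H$ the squares modulo $q$), by applying Takagi's theorem to the quadratic extension $K=F(\sqrt{p_1}\,)$ of $F=\Q(\sqrt{p_1p_2}\,)$. Your Kummer--Chebotarev argument over $\Q(\zeta_f)$ is the natural generalization of exactly that computation: for $f=2$ your $N$ is the biquadratic field $\Q(\sqrt{p_1},\sqrt{p_2}\,)$ and your set $T$ is precisely the coset the paper exploits. In the generic case --- $p_1\ne p_2$ and neither prime dividing $f$ --- your reduction to the nontriviality of $p_1^{f/\ell}$ in $W/\langle p_1/p_2\rangle$ and the valuation check are correct; you could even replace Chebotarev by Theorem \ref{TKum} or \ref{THil2}, since all you need is the density of primes with prescribed power residue symbols for independent elements, which keeps the analytic input at the level of Dedekind's class number formula rather than the full density theorem.

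The genuine gap is the sentence dismissing the ``residual exceptional configurations.'' They are not residual: they are the reason the lemma asserts the existence of \emph{some} subgroup $H$ rather than simply taking $H$ to be the $f$-th powers. Concretely, take $p_1=5$ and $f=10$. Then $\sqrt{5}\in\Q(\zeta_5)=\Q(\zeta_{10})=K$, so $M_2=K$ and your $T$ is empty; correspondingly, every prime $q\equiv 1\bmod 10$ satisfies $(\frac{5}{q})=(\frac{q}{5})=+1$, so $5H$ always lies in the index-$2$ subgroup of the cyclic quotient of order $10$ and its order is never divisible by $10$. No choice of $q$ rescues the construction with $H$ the $f$-th powers; one must pass to a different $H$ (a cyclic quotient of larger order, in which $p_1H$ need only have order \emph{divisible} by $f$ rather than generate), and then the compatibility of that requirement with the constraint $p_1H=p_2H$ has to be re-established by a fresh Kummer-theoretic independence computation. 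A similar degradation occurs in the valuation step when $p_2\mid f$, since the ramification index $\phi(p_2^k)$ enters the congruence for the exponent $a$. This case analysis is the actual content of Artin's Hilfssatz 1 and is not ``the same circle of ideas'' as the generic argument; as written, your proof establishes the lemma only when $p_1\ne p_2$ and $p_1p_2$ is coprime to $f$.
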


Artin actually proved something stronger, namely that, in many
cases, the subgroup $H$ can be taken to be the group of $f$-th 
power residues modulo $q$. Below we only state this stronger result 
for $f = 2$ (I have modified the proof slightly in order to 
allow for the possibility $p_2 = -1$); observe that if $H$ is
the group of squares modulo $q$, then $pH$ has order $2$ if and only
if $(p/q) = -1$:

\begin{lem}
Let $p_1$ be a positive odd prime, and $p_2 \ne p_1$ a prime or 
$p_2 = -1$. Then there exist infinitely many primes $q$ with 
$(p_1/q) = (p_2/q) = -1$. 
\end{lem}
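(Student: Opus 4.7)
The plan is to deduce this statement either from Hilbert's Theorem \ref{THil2} applied to $K = \Q$, $\alpha_1 = p_1$ and $\alpha_2 = p_2$ (these are independent modulo squares in $\Q^\times$ since $p_1$ is an odd prime while $p_2$ is either a distinct prime or $-1$), or, more elementarily and in the spirit of the earlier sections, from the quadratic reciprocity law together with Dirichlet's Theorem \ref{TPAP} on primes in arithmetic progression. I sketch the elementary route.

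The key observation is that, for a prime $q$ coprime to $2 p_1 p_2$, the value of each symbol $(p_i/q)$ depends only on the class of $q$ modulo a fixed integer $M_i$. Indeed, when $p_i$ is an odd prime, quadratic reciprocity gives $(p_i/q) = (-1)^{(p_i-1)(q-1)/4} (q/p_i)$, so the value is determined by $q \bmod 4 p_i$; when $p_i = 2$, the supplementary law $(2/q) = (-1)^{(q^2-1)/8}$ shows the value depends only on $q \bmod 8$; and when $p_i = -1$, the identity $(-1/q) = (-1)^{(q-1)/2}$ shows it depends only on $q \bmod 4$.

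Setting $M = \operatorname{lcm}(M_1, M_2)$, the next step is to exhibit a residue class $a \bmod M$ coprime to $M$ such that every prime $q \equiv a \bmod M$ satisfies $(p_1/q) = (p_2/q) = -1$. By the Chinese Remainder Theorem the residues of $q$ modulo $p_1$, modulo $p_2$ (if $p_2$ is an odd prime), and modulo $8$ can be prescribed independently, so the only real content is to verify that the links between $q \bmod 4$ and the mod $p_i$ constraint (which arise precisely when some $p_i \equiv 3 \bmod 4$) are jointly satisfiable. This is so because each such link leaves one free bit: once $q \bmod 4$ has been fixed, the condition $(p_i/q) = -1$ still allows the choice between $q$ being a residue or a nonresidue modulo $p_i$. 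Hence a compatible $a$ coprime to $M$ always exists.

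Finally, Dirichlet's Theorem \ref{TPAP} applied to the progression $a + M \Z$ yields infinitely many primes $q$ with the required property. The main obstacle, such as it is, is the bookkeeping through the handful of cases determined by $p_1, p_2 \bmod 4$ (and by whether $p_2 = 2$ or $p_2 = -1$); in every case the constraints are consistent and $a$ can be chosen coprime to $M$, so the argument goes through.
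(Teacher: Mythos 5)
Your argument is correct. For each $p_i$ the symbol $(p_i/q)$ is indeed a function of $q$ modulo $4p_i$ (resp.\ $8$, resp.\ $4$), and in every configuration one can first pin down $q \bmod 4$ (or $q \bmod 8$ when $p_2 = 2$) and then use the remaining, CRT-independent freedom of $q$ modulo each odd $p_i$ to force $(p_i/q) = -1$; Dirichlet's Theorem~\ref{TPAP} then supplies infinitely many primes in the resulting class $a \bmod M$. But this is a genuinely different route from the paper's. The paper argues via class field theory, in the spirit of Artin's original Hilfssatz: it sets $F = \Q(\sqrt{p_1p_2}\,)$ and $K = F(\sqrt{p_1}\,)$, invokes the fact that the nontrivial coset of the Takagi group $T_{K/F}$ contains infinitely many prime ideals $\fq$ of degree one, and reads off $(p_1/q) = (p_2/q)$ from the splitting of $q = N\fq$ in $F/\Q$ and $(p_1/q) = -1$ from the inertness of $\fq$ in $K/F$. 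The trade-off: your proof is elementary modulo Dirichlet's theorem but leans on the quadratic reciprocity law --- harmless here, since the lemma sits inside the proof of Artin's reciprocity law rather than of quadratic reciprocity, though worth flagging in a paper whose recurring theme is which auxiliary facts are circular in which proofs --- and it is tied to $f = 2$ and to the base field $\Q$. The paper's proof uses no reciprocity at all (only multiplicativity of the Legendre symbol and the splitting criteria for quadratic extensions) and is the version that scales to Artin's general Hilfssatz, where one needs cosets of $f$-th power residues rather than of squares. Your alternative opening suggestion --- apply Theorem~\ref{THil2} with $K = \Q$, $\alpha_1 = p_1$, $\alpha_2 = p_2$ --- is also valid and packages essentially the same analytic input as the paper's Takagi-group argument.
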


\begin{proof}
Consider the quadratic extension $F = \Q(\sqrt{p_1p_2}\,)$,
and let $K = F(\sqrt{p_1}\,)$. The Takagi group $T_{K/F}$ has 
index $2$ in the group $D$ of ideals coprime to the conductor of 
$K/F$, which divides $2p_1p_2$. The coset $D/T_{K/F}$ different
from $T_{K/F}$ contains infinitely many prime ideals of degree $1$; 
let $\fq$ be such a prime ideal coprime to $2p_1p_2$, and let $q$ 
be its norm.

Since $q$ splits in $F/\Q$, we must have $(p_1/q) = (p_2/q)$.
Since the prime ideals in $F$ above $q$ remain inert in $K/F$,
we must have $(p_1/q) = -1$. 
\end{proof}

Observe that Legendre's Lemma \ref{LLeg2} is exactly the case
$p_1 \equiv 1 \bmod 4$ and $p_2 = -1$ of this special case of 
Artin's Lemma. Perhaps the fact that an incarnation of Legendre's 
Lemma comes up as a special case in the proof of Artin's reciprocity
law shows that Legendre's work is much more than a failed attempt
of proving the quadratic reciprocity law.

\section{Arithmetization}

When Hasse \cite{HasAlg} later provided a proof of Artin's reciprocity 
law via the theory of algebras (see \cite{LR,FR}), he also had to prove 
the existence of certain auxiliary primes; his set of conditions 
is slightly different from Artin's:

\begin{lem}\label{THas}
If $p_1$, \ldots, $p_r$ are distinct prime numbers and $k_1$, \ldots, $k_r$
given natural numbers, then there is a modulus $m$ and a subgroup $U$
of the group $R = (\Z/m\Z)^\times$ such that 
\begin{enumerate}
\item $R/U$ is cyclic;
\item the order of residue class $p_j + m\Z$ in $R/U$ is a multiple of $k_j$
      for all $1 \le j \le r$;
\item the coset $-1 + m\Z$ has order $2$ in $R/U$.
\end{enumerate}
\end{lem}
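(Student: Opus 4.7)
My plan is to take $m=q$ a single auxiliary prime and $U = R^n$ the $n$th power residues modulo $q$, where $n = 2\cdot\operatorname{lcm}(k_1,\ldots,k_r)$. As soon as $q\equiv 1\pmod n$, $R/U$ is cyclic of order $n$ (so (1) is automatic), and the standard isomorphism $R/U \xrightarrow{\sim}\mu_n\subset R$, $xU\mapsto x^{(q-1)/n}$, converts (2) and (3) into prescriptions on the residues $p_j^{(q-1)/n}$ and $(-1)^{(q-1)/n}$ modulo $q$: I need $(q-1)/n$ odd (so that $-1\mapsto -1\in\mu_n$ has order $2$) and $p_j^{(q-1)/n}$ of order exactly $k_j$ in $\mu_n$ for each $j$.

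To secure such a $q$, I will apply Chebotarev's theorem (Theorem \ref{TCD}) to the normal extension
$$L = \Q\big(\zeta_{2n},\,p_1^{1/n},\ldots,p_r^{1/n}\big)/\Q.$$
Kummer theory embeds $\Gal(L/\Q(\zeta_{2n}))$ into $\prod_j \Z/n\Z$, the $j$th coordinate recording the power of $\zeta_n$ by which $p_j^{1/n}$ is twisted. A valuation argument at primes of $\Q(\zeta_{2n})$ above the $p_j$ shows that $p_1,\ldots,p_r$ are multiplicatively independent modulo $n$th powers in $\Q(\zeta_{2n})^\times$, so the embedding is an isomorphism. Chebotarev then yields infinitely many primes $q$ whose Frobenius class in $\Gal(L/\Q)$ contains an element $\phi$ with (i) $\phi|_{\Q(\zeta_{2n})}$ equal to the automorphism $\zeta_{2n}\mapsto\zeta_{2n}^{n+1}$---this forces $q\equiv n+1\pmod{2n}$ (hence $(q-1)/n$ odd) and makes $\phi$ fix $\zeta_n = \zeta_{2n}^2$---and (ii) $\phi(p_j^{1/n}) = \zeta_n^{n/k_j}\cdot p_j^{1/n}$ for each $j$. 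Such a $\phi$ exists: any two lifts of the prescribed restriction to $\Q(\zeta_{2n})$ differ by an element of $\Gal(L/\Q(\zeta_{2n}))$, and by independence this subgroup acts as the full product $\prod_j \Z/n\Z$ on the Kummer generators, so the action on them can be prescribed arbitrarily.

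Setting $m=q$, $R=(\Z/q\Z)^\times$, $U=R^n$, all three conditions then fall out of the Frobenius computation: under $R/U\cong\mu_n$ one reads $p_j\mapsto p_j^{(q-1)/n}\equiv\zeta_n^{n/k_j}\pmod{\fq}$ for a prime $\fq$ of $L$ above $q$, of order exactly $k_j$, while $-1\mapsto(-1)^{(q-1)/n}=-1$, of order $2$. The one step I expect to require genuine care is the multiplicative-independence input feeding the Kummer computation when some $p_j$ happens to divide $2n$ (which the arithmetic of the $k_j$ can force); this is either handled by a direct ramification calculation in $\Q(\zeta_{2n})$, or sidestepped altogether by enlarging $n$ through multiplication by an auxiliary prime chosen coprime to $2\cdot p_1\cdots p_r\cdot\operatorname{lcm}(k_1,\ldots,k_r)$, which affects none of the other conditions.
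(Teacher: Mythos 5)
The paper itself contains no proof of this lemma: it only records Hasse's remark that the Frobenius density theorem yields such an $m$, even a prime one, and refers to van der Waerden for an elementary proof with composite $m$. Your route --- a prime modulus $q$, $U$ the $n$-th power residues with $n = 2\operatorname{lcm}(k_1,\ldots,k_r)$, and Chebotarev applied to $L = \Q(\zeta_{2n},p_1^{1/n},\ldots,p_r^{1/n})$ --- is precisely the density-theorem route the paper gestures at, and it is correct whenever every $p_j$ is coprime to $2n$: there the valuation argument does give full Kummer independence, and all three conditions follow as you say.

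In the exceptional case you flag, however, the argument as written fails, and neither of your proposed repairs addresses it. If $p_j \mid n$, then since $4 \mid 2n$ we have $i \in \Q(\zeta_{2n})$ and hence $\sqrt{p_j} \in \Q(\zeta_{2n})$; the restriction $\zeta_{2n}\mapsto \zeta_{2n}^{n+1}$ therefore already determines $\phi(\sqrt{p_j}) = \big(\frac{p_j}{q}\big)\sqrt{p_j}$, i.e.\ it forces the parity of the exponent $a_j$ in $\phi(p_j^{1/n}) = \zeta_n^{a_j}p_j^{1/n}$. Your prescribed value $a_j = n/k_j$ is \emph{always} even, because $v_2(k_j) \le v_2(\operatorname{lcm}(k_i)) = v_2(n)-1$; so whenever the forced sign is $-1$ the element $\phi$ you hand to Chebotarev does not exist. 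Concretely, for $r=1$, $p_1 = 3$, $k_1 = 3$ one gets $n=6$ and $q \equiv 7 \bmod 12$, which forces $\big(\frac{3}{q}\big) = -1$, while $\phi(3^{1/6}) = \zeta_3\, 3^{1/6}$ would force $\phi(\sqrt{3}) = \sqrt{3}$: the prescribed class is empty. Multiplying $n$ by an auxiliary prime coprime to $2p_1\cdots p_r$ changes nothing, since the same $p_j$ still ramify in the enlarged cyclotomic field. The gap is repairable by using the slack in condition (2): because no root $p^{1/d}$ with $d>2$ can lie in an abelian extension of $\Q$, the only defect of independence is a parity constraint on the $a_j$, and since $v_2(k_j) < v_2(n)$ one can realize an order divisible by $k_j$ with $a_j$ of either parity --- take $a_j = n/k_j$ when the forced parity is even and $a_j$ equal to the odd part of $n/k_j$ (giving order $2^{v_2(n)}\cdot k_j/2^{v_2(k_j)}$, still a multiple of $k_j$) when it is odd. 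With that modification your proof goes through.
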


Hasse remarked that the Frobenius density theorem guarantees the existence
of such a modulus $m$, and that it even can be chosen to be prime. He also
mentioned that he expected that this result can be proved with elementary
means by allowing $m$ to be composite, and that Artin meanwhile found
a proof of his reciprocity law by using the special case $r = 1$ of 
Hasse's Lemma \ref{THas}. 

There is a certain analogy with the following classical result in 
algebraic number theory: Dirichlet's analytic methods and class
field theory show that in a number field $K$, each ideal class in 
$\Cl(K)$ contains a prime ideal with degree $1$. Kummer, on the
other hand, observed that there is an algebraic proof of the 
slightly weaker fact that each ideal class contains an ideal whose
prime ideal factors all have degree $1$ (strictly speaking, the
result in this generality is due to Hilbert \cite[Satz 89]{Hil}; 
Kummer only had the general theory of ideal numbers in cyclotomic 
fields). For a similar approach to special cases of the Chebotarev
density theorem see Lenstra \& Stevenhagen \cite{SteLe}.

Chevalley \cite{Chev} succeeded in giving a non-analytic proof of the
main theorems of class field theory; in particular, he proved the
special case $r = 1$ of Hasse's Lemma \ref{THas}, which 
Hasse  presented (with a simplified proof) in his lectures 
\cite[Satz 139]{HasKKT} on class field theory:

\begin{thm}
Let $a > 1$, $k$ and $n$ be given integers. Then there exists a modulus
$m$ coprime to $k$ for which $R = (\Z/m\Z)^\times$ contains a subgroup 
$U$ with the following properties:
\begin{enumerate}
\item $R/U$ is cyclic;
\item the order of the coset $aU$ is divisible by $n$.
\end{enumerate}
\end{thm}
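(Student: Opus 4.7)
My plan is to take $m$ to be a single prime $q$ coprime to $k$, with $U = \{1\}$. Then $R/U = (\Z/q\Z)^\times$ is automatically cyclic, and condition (2) reduces to requiring that $n \mid \mathrm{ord}_q(a)$. The whole problem thus becomes: find a prime $q$, coprime to $k$, such that the multiplicative order of $a$ modulo $q$ is divisible by $n$.

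To produce such primes, I would work inside the number field $L = \Q(\zeta_n, a^{1/n})$, which is normal over $\Q$ (since $\zeta_n \in L$) and contains the cyclotomic subfield $F = \Q(\zeta_n)$. For an unramified prime $q$ splitting completely in $F/\Q$ (equivalently $q \equiv 1 \bmod n$), and a prime $\fq$ of $F$ above $q$, the Frobenius of $\fq$ in the cyclic Kummer extension $L/F$ acts on $a^{1/n}$ by $a^{1/n} \mapsto \zeta_n^c \cdot a^{1/n}$, and the defining congruence $\mathrm{Frob}_{\fq}(x) \equiv x^q \pmod{\fq}$ yields $a^{(q-1)/n} \equiv \zeta_n^c \pmod{\fq}$. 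If $\gcd(c, [L{:}F]) = 1$, i.e. the Frobenius generates $\Gal(L/F)$, then $a^{(q-1)/n}$ has order $[L{:}F]$ modulo $q$, and writing $d = \mathrm{ord}_q(a)$ and using $d/\gcd(d,(q-1)/n) = [L{:}F]$ one concludes $[L{:}F] \mid d$. The Chebotarev density theorem (Thm.~\ref{TCD}), applied to any conjugacy class of $\Gal(L/\Q)$ whose elements fix $F$ pointwise and generate $\Gal(L/F)$, supplies infinitely many such $q$, from which I can choose one coprime to $k$.

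The main obstacle is that this naive construction only forces $[L{:}F] \mid \mathrm{ord}_q(a)$, and $[L{:}F]$ can be a proper divisor of $n$ — precisely when $a$ is an $\ell$-th power in $F^\times$ for some prime $\ell \mid n$, a Kummer-theoretic phenomenon that cannot be read off $a$ alone. To handle this cleanly, I would argue prime-power by prime-power: write $n = \prod_\ell \ell^{e_\ell}$; for each $\ell \mid n$, apply the procedure above inside $\Q(\zeta_{\ell^{e_\ell+1}}, a^{1/\ell^{e_\ell}})$ to produce a prime $q_\ell$ (coprime to $ak$ and distinct from the other $q_{\ell'}$) such that $v_\ell(q_\ell - 1) = e_\ell$ exactly and $a$ is not an $\ell$-th power modulo $q_\ell$, which together force $\ell^{e_\ell} \mid \mathrm{ord}_{q_\ell}(a)$ via the elementary primitive-root calculation. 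Finally set $m = \prod_\ell q_\ell$ and let $U \le R = \prod_\ell (\Z/q_\ell\Z)^\times$ be the product of the unique subgroups of index $\ell^{e_\ell}$ in each cyclic factor. Then $R/U$ is a direct product of cyclic groups of pairwise coprime orders $\ell^{e_\ell}$, hence itself cyclic of order $n$; the Chinese Remainder Theorem shows that the image of $a$ in $R/U$ has order exactly $n$, and $\gcd(m,k)=1$ by construction.
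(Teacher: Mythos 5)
Your overall architecture --- one prime $q_\ell$ for each prime power $\ell^{e_\ell}$ exactly dividing $n$, then $m=\prod_\ell q_\ell$ with $U$ assembled by the Chinese Remainder Theorem so that $R/U$ is cyclic of order $n$ --- is sound, and the final assembly is correct \emph{granted} the primes you ask for. The gap is that those primes need not exist. You require $v_\ell(q_\ell-1)=e_\ell$ \emph{exactly} together with ``$a$ is not an $\ell$-th power modulo $q_\ell$,'' and these two conditions can be simultaneously unsatisfiable even though the theorem remains true. (i) If $a$ is a perfect $\ell$-th power in $\Z$ --- the theorem only assumes $a>1$, so $a=4$, $n=2$ or $a=8$, $n=3$ are admissible --- then $a$ is an $\ell$-th power modulo \emph{every} prime and no $q_\ell$ exists; yet $m=5$, $U=\{1\}$ settles $(a,n)=(4,2)$ since $4\equiv -1 \bmod 5$ has order $2$. (ii) If $a=2c^2$ and $e_\ell=e\ge 3$ for $\ell=2$, then $v_2(q-1)=e$ forces $q\equiv 1 \bmod 8$, hence $(2/q)=+1$ by the second supplementary law, so $a$ \emph{is} a square modulo every admissible $q$: this is precisely the entanglement $\sqrt{2}\in\Q(\zeta_8)\subseteq\Q(\zeta_{2^{e+1}})$ that you correctly flagged for your first construction but that survives your prime-by-prime fix. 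Yet $m=17$, $U=\{1\}$ settles $(a,n)=(2,8)$, since $2$ has order $8$ modulo $17$ --- a prime your recipe excludes because $v_2(16)=4\ne 3$.

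The culprit is the exactness requirement, and it is unnecessary. It suffices to produce $q_\ell$ with $\ell^{e_\ell}\mid \operatorname{ord}_{q_\ell}(a)$ and then to take $U_\ell$ of index equal to the \emph{full} $\ell$-part of $q_\ell-1$: the quotient is still cyclic of $\ell$-power order, the image of $a$ in it has order $\ell^{v_\ell(\operatorname{ord}_{q_\ell}(a))}\ge \ell^{e_\ell}$, and the product over $\ell$ remains cyclic with $aU$ of order divisible by $n$. The existence of such $q_\ell$ (for any $a>1$, perfect power or not) follows without Chebotarev from the classical observation that a prime divisor $q$ of $\Phi_N(a)$ with $q\nmid N$ satisfies $\operatorname{ord}_q(a)=N$, together with $\Phi_N(a)>1$. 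That elementary route is also the one the context demands: the paper states this theorem without proof, citing Chevalley and Hasse's Satz 139, exactly because its interest is that it can be proved \emph{without} Dirichlet's or Frobenius's analytic apparatus (compare van der Waerden's elementary proof cited at the end of the section). An argument resting on Theorem \ref{TCD} would, even if repaired, prove the statement by means the whole section is trying to eliminate.
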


If $a$ is an odd prime, and if we demand that $m$ be prime and that $U$ 
is the subgroup of squares modulo $m$, then the case $n = 2$ of this 
theorem predicts the existence of a prime $m$ such that the coset 
$a + m\Z$ has order $2$ modulo squares, i.e., that $(\frac ma) = -1$.
Showing the existence of primes in certain classes seems to require
analytic techniques in most cases; Chevalley's success in giving 
an arithmetic proof of class field theory is due in part to Hasse's 
insight that Artin's proof can be modified in such a way that one 
can do with nonprime moduli $m$. An elementary proof of 
Lemma \ref{THas} for general $r$ was given by van der Waerden 
\cite{vdW}.

As we have seen, an important step in the arithmetization of 
class field theory was the realization that the role of 
auxiliary primes could be played by composite numbers with
suitable properties, whose existence could be proved without
Dirichlet's analytic techniques. This brings up the question
whether Legendre's proof of the quadratic reciprocity law or
Gauss's first proof can be modified in a similar way.

\vskip 0,5cm
\end{document}